\newtheorem{theorem}{Theorem}
\newtheorem{prop}[theorem]{Proposition}
\newtheorem{lemma}[theorem]{Lemma}
\newtheorem{coroll}[theorem]{Corollary}
\theoremstyle{remark}
\newtheorem{assumption}[theorem]{Assumption}
\newtheorem{remark}[theorem]{Remark}
\newcommand{\BR}{{\mathbb R}}
\newcommand{\BE}{{\mathbb E}}
\newcommand{\BP}{{\mathbb P}}
\newcommand{\CL}{{\mathcal L}}
\newcommand{\CS}{{\mathcal S}}
\newcommand{\CC}{{\mathcal C}}
\newcommand{\CJ}{{\mathcal J}}
\newcommand{\CI}{{\mathcal I}}
\newcommand{\CE}{{\mathcal E}}
\newcommand{\ov}{\overline}
\providecommand{\abs}[1]{\left\lvert#1\right\rvert}
\newcommand{\floor}[1]{\lfloor#1\rfloor}
\newcommand{\beql}[1]{\begin{equation}\label{#1}}
\newcommand{\eqn}[1]{(\ref{#1})}
\begin{document}

\begin{frontmatter}

\title{Tightness of invariant distributions of a large-scale flexible service system under a priority discipline}
\runtitle{Steady-state tightness, priority discipline}

\begin{aug}
  \author{\fnms{Alexander L.}  \snm{Stolyar}\ead[label=e1]{stolyar@research.bell-labs.com}}
  \and
  \author{\fnms{Elena}  \snm{Yudovina}\corref{}\thanksref{t1}\ead[label=e2]{yudovina@umich.edu}}

  \thankstext{t1}{The research of this author was supported by the NSF Graduate Research Fellowship.}

  \runauthor{A. Stolyar, E. Yudovina}

  \affiliation{Alcatel-Lucent Bell Labs and University of Michigan}

  \address{Murray Hill, NJ\\
          \printead{e1}}

  \address{Ann Arbor, MI\\ 
          \printead{e2}}

\end{aug}

\begin{abstract}
We consider 
large-scale service systems with multiple customer classes and multiple server pools; interarrival and service times are exponentially distributed, and mean service times depend both on the customer class and server pool. It is assumed that the allowed activities (routing choices) form a tree (in the graph with vertices being both customer classes and server pools). We study the behavior of the system under a {\em Leaf Activity Priority} (LAP) policy, which assigns static priorities to the activities in the order of sequential ``elimination" of the tree leaves.

We consider the scaling limit of the system as the arrival rate of customers and number of servers in each pool tend to infinity in proportion to a scaling parameter $r$, while the overall system load remains strictly subcritical. Indexing the systems by parameter $r$, we show that (a) the system under LAP discipline is stochastically stable for all sufficiently large $r$ and (b) the family of the invariant distributions  is tight on scales $r^{\frac12+\epsilon}$ for all $\epsilon > 0$. (More precisely, the sequence of invariant distributions, centered at the equilibrium point and scaled down by $r^{-(\frac12+\epsilon)}$, is tight.) 
\end{abstract}

\begin{keyword}[class=AMS]
\kwd[Primary ]{60K25}
\kwd{60F17}
\end{keyword}

\begin{keyword}
\kwd{many server models}
\kwd{fluid limits}
\kwd{tightness of invariant distributions}
\end{keyword}

\end{frontmatter}

%\newpage
%\input{Corrections}
%\newpage

\maketitle

\section{Introduction}

Large-scale service systems with heterogeneous customer and server populations bring up the need for efficient dynamic control policies that dynamically match arriving (or waiting) customers and available servers. It is desirable to have algorithms that avoid excessive customer waiting and do not rely on the knowledge of system parameters.

Consider a 
service system with multiple customer and server types, where the arrival rate of class $i$ customers is $\Lambda_i$, the service rate of a class $i$ customer by a type $j$ server is $\mu_{ij}$, and the server pool sizes are $B_j$. A desirable feature of a dynamic control is insensitivity to parameters $\Lambda_i$ and $\mu_{ij}$. That is, the assignment of customers to server pools should, to the maximal degree possible, depend only on the current system state (server occupancies, queue sizes), and not on prior knowledge of arrival rates or mean service times, because those parameters  may not be known in advance and, moreover, they may be changing in time.

If the system objective is to minimize the largest average load of any server pool, a ``static'' optimal control can be obtained by solving a linear program, called {\em static planning problem} (SPP), which has $B_j$'s, $\mu_{ij}$'s and $\Lambda_i$'s as parameters. An optimal solution to the SPP will prescribe optimal average rates $\Lambda_{ij}$ at which arriving customers should be routed to the server pools. Typically (in a certain sense) the solution to SPP is unique and the {\em basic activities}, i.e. routing choices $(ij)$ for which $\Lambda_{ij}>0$, form a tree; let us assume this is the case. Probabilistic routing with static probabilities $\Lambda_{ij}/\Lambda_{i}$ of routing a customer of type $i$ to server pool $j$, will balance the loads among different server pools and will avoid excessive customer waiting; however, in order to find the routing probabilities, it is necessary to know all of the parameters $\Lambda_i$, $B_j$, and $\mu_{ij}$ in advance. The \emph{Shadow Routing} policy in
\cite{Stolyar_Tezcan_underload} 
is a dynamic control policy, which achieves the load balancing objective without a priori knowledge of input rates $\Lambda_i$; in the process it ``automatically identifies'' the basic activity tree. Shadow Routing policy, however, does need to ``know'' the service rates $\mu_{ij}$.

In this paper we assume that the basic activity tree is known, but not the precise rates $\Lambda_i$, $\mu_{ij}$; we restrict the routing choices to activities within the basic activity tree. 
We consider the large-number-of-servers asymptotic regime, in which the arrival rate of customers and number of servers in each pool tend to infinity in proportion to a scaling parameter $r$; our focus is on the case where the overall system load remains strictly subcritical. 
In a previous paper \cite{SY10} we showed that a very natural load balancing policy considered e.g. by \cite{Gurvich_Whitt}, \cite{Armony_Ward}, \cite{Atar2009} may lead to instability at the system equilibrium point: in particular,
for certain parameter settings
 \cite[Theorem 7.2]{SY10} 
demonstrated the non-tightness (in fact -- evanescence to infinity) of invariant measures on the diffusion, $r^{\frac12}$- scale.
(More precisely, this means that  the sequence of invariant distributions, 
centered at the equilibrium point and scaled down by $r^{-\frac12}$, is non-tight, and moreover -- escapes to infinity.)

In this paper we consider a different algorithm, which we call the {\em Leaf Activity Priority} (LAP) policy. As specified above, no precise knowledge of the rates $\Lambda_i$ and $\mu_{ij}$ is required, besides the knowledge of the basic activity tree,
and routing is restricted to basic activities only. The policy
assigns static priorities to the activities in the order of sequential ``elimination" of the tree leaves.
The precise definition will be given in Section \ref{section:LAP}. Assuming strictly subcritical load,
for this policy we first prove that the system is stochastically stable for all sufficiently large values of $r$.
(In contrast to load balancing policies, the stability under LAP is not ``automatic".)
Next, we demonstrate the $r$-scale (fluid-scale) tightness of stationary distributions;
this fact is closely related to stability -- both are``consequences" of the relatively ``benign" behavior of 
the system on the fluid scale. Then, we obtain a much stronger tightness result, namely that the invariant distributions are tight on the $r^{\frac12 + \epsilon}$-scale, for any $\epsilon>0$; this is the main contribution of the paper, which involves
the analysis of the process under hydrodynamic and local-fluid scaling (in addition to ``standard" fluid scaling).
We believe that our analysis can be extended to prove still stronger, diffusion scale ($r^{1/2}$) tightness; this is work in progress.

For a general review of literature on the large-number-of-servers asymptotic regime, including design and analysis of efficient control algorithms,
see e.g.  \cite{Gurvich_Whitt,Stolyar_Tezcan_underload} and references therein.

The rest of the paper is organized as follows. In Section~\ref{section:model} the model, the asymptotic regime, LAP discipline
and basic notation and introduced. The main results are stated in Theorem~\ref{th-stationary-scale-main} of Section~\ref{sec-main-res},
with its statements (i) and (ii) being the stability and tightness results, respectively. Section~\ref{sec-fluid-scaling} contains the analysis of the process on the fluid scale, which leads to establishing stability (Theorem~\ref{th-stationary-scale-main}(i)) 
and fluid scale ($r$-scale) tightness of stationary distributions. In Section~\ref{sec-main-proof}, using the fluid-scale tightness as a starting point, we prove the $r^{1/2+\epsilon}$-scale tightness 
(Theorem~\ref{th-stationary-scale-main}(ii)); this is the key part of the paper, which involves the analysis of system dynamics under LAP discipline under hydrodynamic and local-fluid scaling.

\section{Model}
\label{section:model}

\subsection{The model; Static Planning (LP) Problem}
Consider the model in which there are $I$ customer classes, labeled $1,2,\dotsc,I$, and $J$ server pools, labeled $1,2,\dotsc,J$. 
(Servers within pool $j$ are referred to as class $j$ servers. Also,
throughout this paper the terms ``class'' and ``type" are used interchangeably.)
The sets of customer classes and server pools will be denoted by $\CI$ and $\CJ$, respectively. We will use the indices $i$, $i'$ to refer to customer classes, and $j$, $j'$ to refer to server pools.

We are interested in the scaling properties of the system as it grows large. The meaning of ``grows large" is as follows. We consider a sequence of systems indexed by a scaling parameter $r$. As $r$ grows, the arrival rates and the sizes of the service pools, but not the speed of service, increase. Specifically, in the $r$th system, customers of type $i$ enter the system as a Poisson process of rate $\lambda^r_i = r \lambda_i$, while the $j$th server pool has $r \beta_j$ individual servers. (All $\lambda_i$ and $\beta_j$ are positive parameters.) Customers may be accepted for service immediately upon arrival, or enter a queue; there is a separate queue for each customer type. Customers do not abandon the system. When a customer of type $i$ is accepted for service by a server in pool $j$, the service time is exponential of rate $\mu_{ij}$; the service rate depends both on the customer type and the server type, but \emph{not} on the scaling parameter $r$. If customers of type $i$ cannot be served by servers of class $j$, the service rate is $\mu_{ij} = 0$.

\begin{remark}
Strictly speaking, the quantity $\beta_j r$ may not be an integer, so we should define the number of servers in pool $j$ as, say, $\floor{\beta_j r}$. However, the change is not substantial, and will only unnecessarily complicate the notation.
\end{remark}

Consider the following, load-balancing, {\em static planning problem} (SPP):
\begin{subequations}\label{eqn:Static LP}
\begin{equation}
\min_{\lambda_{ij}, \rho} \rho,
\end{equation}
subject to
\beql{lp-constraint0}
\lambda_{ij} \geq 0, ~~\forall i,j
\end{equation}
\beql{lp-constraint1}
\sum_j \lambda_{ij} = \lambda_i , ~~\forall i
\end{equation}
\beql{lp-constraint2}
\sum_i \lambda_{ij} / (\beta_j \mu_{ij}) \leq \rho , ~~\forall j.
\end{equation}
\end{subequations}

Throughout this paper we will always make the following two assumptions about the solution to the SPP \eqref{eqn:Static LP}:
\begin{assumption}[Complete resource pooling]
\label{assuption: CRP}
The SPP \eqref{eqn:Static LP} has a unique optimal solution $\{\lambda_{ij}, ~i\in \CI, ~j\in \CJ\}, \rho$. Define the \emph{basic activities} to be the pairs, or edges, $(ij)$ for which $\lambda_{ij} > 0$. Let $\CE$ be the set of basic activities. We further assume that the unique optimal solution is such that $\CE$ forms a  tree in the (undirected) graph with vertices set $\CI \cup \CJ$.
\end{assumption}

\begin{assumption}[Underload]
\label{CRP:2} The optimal solution to \eqref{eqn:Static LP} has $\rho < 1$.
\end{assumption}

\begin{remark}
Assumption~\ref{assuption: CRP} is the \emph{complete resource pooling} (CRP) condition, which holds ``generically'' in a certain sense; see \cite[Theorem 2.2]{Stolyar_Tezcan}. Assumption~\ref{CRP:2} is essential for the main results of the paper ($r^{\frac12+\epsilon}$-scale tightness),
but many of the auxiliary results hold (along with their proofs) for the critically loaded case $\rho=1$.
\end{remark}

Note that under the CRP condition, all  (``server pool capacity'')  constraints \eqn{lp-constraint2} are binding:  $\sum_i \lambda_{ij}/(\beta_j \mu_{ij}) = \rho, ~\forall j$.
This in particular means that the optimal solution to SPP is such that, if a system with parameter $r$ will route type $i$ customers to pool $j$ at the rate $\lambda_{ij}r$, the server pool average loads will be minimized and ``perfectly balanced''.

In this paper, we assume that the basic activity tree is known in advance, and restrict our attention to the basic activities only. Namely, we assume that a type $i$ customer service in pool $j$ is allowed only if $(ij)\in\CE$. (Equivalently, we can a priori assume that $\CE$ is the set of {\em all} possible activities, i.e. $\mu_{ij} = 0$ when $(ij)\not\in\CE$, and $\CE$ is a tree. In this case CRP requires that all feasible activities are basic.) 
For a customer type $i$, let $\CS(i) = \{j: (ij)\in\CE \}$; for a server type $j$, let $\CC(j) = \{i: (ij)\in\CE \}$.

Under the CRP condition, optimal dual variables $\nu_i, ~i\in \CI$, and $\alpha_j, ~j\in \CJ$, corresponding to constraints \eqn{lp-constraint1} and \eqn{lp-constraint2}, respectively, are unique and all strictly positive. The dual variable $\nu_i$ is interpreted as the ``workload'' associated with one type $i$ customer, and $\frac{\alpha_j}{\beta_j}$ is interpreted as the (average) rate at which one server in pool $j$ processes workload when it is busy, regardless of the customer type on which it is working, as long as $i\in \CC(j)$. The dual variables satisfy the relations $\nu_i \mu_{ij} = \alpha_j/\beta_j$ for any $(ij)\in\CE$, and $\sum_j \alpha_j=1$, which in particular imply that 
\beql{eqn:workload}
\sum_i \nu_i \lambda_i = \sum_i \sum_j \nu_i \lambda_{ij}
=\sum_i \sum_j\lambda_{ij} \frac{\alpha_j}{\beta_j \mu_{ij}}=
\sum_j \alpha_j\sum_i \frac{\lambda_{ij}}{\beta_j \mu_{ij}}=
%\sum_j \alpha_j \rho  = 
\rho.
\end{equation}
Given $\rho<1$, this means, for example, that when all servers in the system are busy, the total rate $\sum_i \nu_i \lambda_i r$ at which new workload arrives in the system is strictly less than the rate $\sum_j \alpha_j r =r$ at which it is served.

\begin{remark}
Although \eqref{eqn:Static LP} is the load-balancing SPP, and the notions introduced in this subsection are defined in terms of this SPP,
the policy we consider in this paper (defined in Section \ref{section:LAP}) is {\em not} a load balancing policy.
In particular, the system equilibrium point under the policy, will {\em not} balance server pool loads, but rather will keep all pools, except one,
fully occupied.
\end{remark}

\subsection{Leaf activity priority (LAP) policy}\label{section:LAP}

For the rest of the paper, we analyze the performance of the following policy, which we call {\em leaf activity priority} (LAP).
The first step in its definition is the assignment of priorities to customer classes and activities.

Consider the basic activity tree, and assign priorities to the edges as follows. First, we assign priorities to customer classes by iterating the following procedure:
\begin{enumerate}
\item Pick a leaf of the tree;
\item If it is a customer class (rather than a server class), assign to it the highest priority that hasn't yet been assigned;
\item Remove the leaf from the tree.
\end{enumerate}
Without loss of generality, we assume the customer classes are numbered in order of priority (with 1 being highest). We now assign priorities to the edges of the basic activity tree by iterating the following procedure:
\begin{enumerate}
\item Pick the highest-priority customer class;
\item If this customer class {\em is} a leaf, pick the edge going out of it,
assign this edge the highest priority that hasn't yet been assigned, and remove the edge together with the customer class;
\item If this customer class is {\em not} a leaf, then pick any edge from it to a server class leaf (such necessarily exists),
assign to this edge the highest priority that hasn't yet been assigned, and remove the edge.
%\item Pick an edge of the activity tree going out of the class to a leaf;
%\item Assign this edge the highest priority that hasn't yet been assigned, and remove the edge;
%\item When the customer class becomes a leaf of the activity tree, assign the remaining edge out of it the highest priority that hasn't yet been assigned;
%\item Remove the edge together with the customer class.
\end{enumerate}
It is not hard to verify that this algorithm will successfully assign priorities to all edges; it suffices to check that at any time the highest remaining priority
customer class will have at most one outgoing edge
to a non-leaf server class.
%``leading'' to customer classes of lower priority.

\begin{remark}\label{rem: priorities}
This algorithm does {\em not} produce a unique assignment of priorities, neither for the customer classes nor for the activities,
because there may be multiple options for picking a next leaf or edge to remove, in the corresponding procedures.
This is not a problem, because our results hold for {\em any} such assignment. 
Different priority assignments  may correspond to different equilibrium points (defined below in Section~\ref{sec: LAP equilibrium}); 
once we have picked a particular priority assignment, there is a (unique) corresponding equilibrium point, 
and we will be showing steady-state tightness around that point. 
Furthermore, the flexibility in assigning priorities may be a useful feature in practice. For example, it is easy to specialize the above
priority assignment procedure so that the lowest priority is given to any a priori picked activity. 
\end{remark}

We illustrate one such priority assignment in Figure~\ref{fig:priorities}.
\begin{figure}[htbp]
\includegraphics{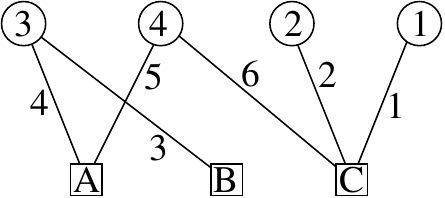}
\caption{An example assignment of priorities to customer classes and activities to an example network. Circles represent customer classes, squares represent server pools.}
\label{fig:priorities}
\end{figure}

We will write $(ij) < (i'j')$ to mean that activity $(ij)$ has higher priority than activity $(i'j')$. 
It follows from the priority assignment algorithm that 
$i < i'$ (customer class $i$ has higher priority than $i'$) implies $(ij) < (i'j')$. In particular,  if $j=j'$, we have $(ij) < (i'j)$ if and only if $i < i'$. 
Without loss of generality, we shall assume that the server classes are numbered so that the lowest-priority activity is $(IJ)$. (In Figure~\ref{fig:priorities}, this corresponds to assigning the number 3 to server pool $C$.)

Now we define the LAP policy itself. It consists of two parts: routing and scheduling. ``Routing'' determines where an arriving customer goes if it sees available servers of several different types. ``Scheduling'' determines which waiting customer a server picks if it sees customers of several different types waiting in queue.

{\bf Routing:} An arriving customer of type $i$ picks an unoccupied server in the pool $j \in \CS(i)$ such that $(ij) \leq (ij')$ for all $j' \in \CS(i)$ with idle servers. If no server pools in $\CS(i)$ have idle servers, the customer queues.

{\bf Scheduling:} A server of type $j$ upon completing a service picks the customer from the queue of type $i \in \CC(j)$ such that $i \leq i'$ for all $i' \in \CS(i)$ with $Q_{i'} > 0$. If no customer types in $\CC(j)$ have queues, the server remains idle.

We introduce the following notation (for the system with scaling parameter $r$):\\
$\Psi^r_{ij}(t)$, the number of servers of type $j$ serving customers of type $i$ at time $t$;\\
$Q^r_i(t)$, the number of customers of type $i$ waiting for service at time $t$.

\subsection{LAP equilibrium point}\label{sec: LAP equilibrium}
Informally speaking, the equilibrium point $(\psi^*_{ij},q^*_i)_{(ij) \in \CE, i \in \CI}$ is the desired operating point for the (fluid scaled)
vector $(\Psi^r_{ij}/r,Q^r_i/r)_{(ij) \in \CE, i \in \CI}$ of occupancies and queue lengths under the LAP policy. Specifically, we will be showing that in steady state the fluid-scaled vector converges in distribution to the equilibrium point, and will then show that the deviations from it are small. We define the equilibrium point below; it will be the stationary point of the fluid models defined in Section~\ref{sec-fluid-scaling}.

The LAP discipline is not designed with load balancing in mind, so its equilibrium point does {\em not}, of course, achieve load balancing
among the server pools. To define it, we recursively define the quantities $\lambda_{ij}\ge 0$, which have the meaning of routing rates, scaled down by factor $1/r$. (These $\lambda_{ij}$ are {\em not} the same as those given by the optimal solution to the SPP \eqn{eqn:Static LP}.) For the activity $(1j)$ with the highest priority, define either $\lambda_{1j} = \lambda_1$ and $\psi^*_{1j} = \frac{\lambda_1}{\mu_{1j}}$, or $\psi^*_{1j} = \beta_j$ and $\lambda_{1j} = \beta_j \mu_{1j}$, according to whichever is smaller. Replace $\lambda_1$ by $\lambda_1 - \lambda_{1j}$ and $\beta_j$ by $\beta_j - \psi_{1j}^*$, and remove the edge $(1j)$ from the tree. We now proceed similarly with the remaining activities.

Formally, set
\[
\lambda_{ij} = \min\left(\lambda_i - \sum_{j': (ij') < (ij)} \lambda_{ij'}, \mu_{ij}\left(\beta_j - \sum_{i'<i} \frac{\lambda_{i'j}}{\mu_{i'j}}\right)\right).
\]
Since the definition is in terms of higher-priority activities, this defines the $(\lambda_{ij})_{(ij) \in \CE}$ uniquely. The LAP equilibrium point is defined to be the vector
\[
(\psi^*_{ij},q^*_i)_{(ij) \in \CE, i \in \CI}
\]
given by
\beql{eqn:LAP equilibrium}
\psi^*_{ij} = \frac{\lambda_{ij}}{\mu_{ij}}, \quad q^*_i = 0 \text{ for all $(ij) \in \CE$, $i \in \CI$}.
\end{equation}
(Since we're in the underloaded case $\rho<1$, all queues should be 0 at equilibrium.)

To avoid trivial complications,  throughout the paper we make the following assumption:
\begin{assumption}\label{ass:rho}
If $(\psi_{ij})_{(ij) \in \CE}$ are such that $\psi_{ij} \geq 0$, $\lambda_i = \sum_j \mu_{ij} \psi_{ij}$, and $\sum_i \psi_{ij} \leq \beta_j$ for all $j$, then 
$\psi_{ij} > 0$ for all $(ij)\in \CE$.
In particular, the equilibrium point satisfies this condition and, moreover, it is such that $\sum_i \psi^*_{ij} = \beta_j$ for all $j < J$ and $\sum_i \psi^*_{iJ} < \beta_J$.
\end{assumption}
The assumption means that the system needs to employ (on average) all activities in order to be able to handle the load.
It holds, for example, whenever $\rho$ is sufficiently close to 1.  Indeed, suppose the arrival rates $(\lambda_i)_{i\in\CI}$ are such that $\rho = 1-\epsilon$, and consider the system with arrival rates $\hat\lambda_i = \frac{1}{1-\epsilon} \lambda_i$. The basic activity tree $\CE$ will be the same for $(\hat \lambda_i)_{i\in\CI}$ as for $(\lambda_i)_{i\in\CI}$. 
Since $\rho = 1$ for $(\hat \lambda_i)_{i\in\CI}$ and CRP holds, there exists a unique set of $(\hat\psi_{ij})_{(ij) \in \CE}$ that satisfies the conditions, and it has $\hat\psi_{ij} > 0$ for all $(ij) \in \CE$. Also, if $\epsilon$ is sufficiently small, we must have $\psi_{ij} \approx \hat\psi_{ij}$ for all $(ij) \in \CE$, and hence $\psi_{ij} > 0$ for all $(ij) \in \CE$.

\begin{remark}
Assumption~\ref{ass:rho} is technical -- our main result, Theorem~\ref{th-stationary-scale-main}, can be proved without it, by following the approach presented in the paper.
But, it simplifies the statements and proofs 
of many auxiliary results, and thus substantially improves the exposition.
\end{remark}

\begin{remark}
Although the LAP equilibrium point does not achieve load balancing, when system is heavily loaded  (i.e. $\rho$ is close to $1$),
the load balancing is achieved approximately, in the sense that all queues are kept small and all servers are almost fully loaded --
which is the best any ``load balancer'' could do (when $\rho$ is close to $1$).
\end{remark}

\subsection{Basic notation}

Vector $(\xi_i, ~i\in \CI)$, where $\xi$ can be any symbol, is often written as $(\xi_i)$;
 %or $\xi_{\CI}$; 
similarly, 
$(\xi_j, ~j\in \CJ)=(\xi_j)$
%$(\xi_j, ~j\in \CJ)=(\xi_j)=\xi_{\CJ}$
 and 
$(\xi_{ij}, ~(ij)\in \CE)=(\xi_{ij})$. 
%$(\xi_{ij}, ~(ij)\in \CE)=(\xi_{ij})=\xi_{\CE}$. 
Furthermore, we often use notation $(\xi_i, \eta_{ij})$   to mean  $((\xi_i, ~i\in \CI), (\eta_{ij}, ~(ij)\in \CE))$,
and similar notations as well.
% In matrix expressions, $\xi_{\CI}$, $\xi_{\CJ}$ and $\xi_{\CE}$ are viewed as column-vectors; $v'$ denotes transposition of $v$. 
Unless specified otherwise, $\sum_i \xi_{ij} = \sum_{i \in \CC(j)} \xi_{ij}$ and $\sum_j \xi_{ij} = \sum_{j \in \CS(i)} \xi_{ij}$. For functions (or random processes) $(\xi(t), ~t\ge 0)$ we often write $\xi(\cdot)$. (And similarly for functions with domain different from $[0,\infty)$.) So, for example, $(\xi_i(\cdot))$ signifies $((\xi_i(t), i\in \CI), ~t\ge 0)$. 
%The indicator function of a set $A$ is denoted $\one_A$; that is, $\one_A(\omega) = 1$ if $\omega \in A$ and 0 otherwise. 

The symbol $\implies$ denotes convergence in distribution of random variables in the Euclidean space $\mathbb{R}^d$ (with appropriate dimension $d$).
The symbol $\to$ denotes ordinary convergence in $\BR^d$. Standard Euclidean norm of a vector $x\in \mathbb{R}^d$ is denoted $\abs{x}$, while 
%COM: NEED? 
$\|x\|$ denotes the $L_1$-norm (sum of absolute values of the components); 
{\em u.o.c.} means {\em uniform on compact sets} convergence of functions, with the domain defined explicitly or
by the context.
For $x \in \BR$, $\floor{x}$ is the greatest integer less than or equal to $x$.

\section{Main result}
\label{sec-main-res}

We are now in position to state our main result.

\begin{theorem}\label{th-stationary-scale-main}
Consider the sequence of systems under LAP policy,
 in the scaling regime and under the assumptions
specified in Section~\ref {section:model}, with $\rho<1$. Then:\\ 
(i) For all sufficiently large $r$, the system is stable, i.e. the countable Markov 
chain
$(\Psi^r_{ij}(\cdot), Q^r_i(\cdot))$ is positive recurrent.\\
(ii) For any $\epsilon>0$, the stationary distribution
of $r^{-1/2-\epsilon} (\Psi^r_{ij}(\cdot)-\psi_{ij}^* r, Q^r_i(\cdot))$ weakly converges to $0$.
\end{theorem}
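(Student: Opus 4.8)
The plan is to establish the two statements separately, using the fluid-scale analysis as a foundation for both. For part (i), stability, I would study the fluid model associated with the LAP dynamics. The key object is a fluid-scaled process obtained by dividing the state by $r$ and examining its limit as $r \to \infty$. Because the overall load is strictly subcritical ($\rho < 1$), the total workload $\sum_i \nu_i (\text{customers in system})$ should act as a Lyapunov function that drains at a strictly positive rate whenever the system is sufficiently loaded: by \eqref{eqn:workload}, new workload arrives at rate $\rho r < r$ while busy servers process it at rate up to $r$. The plan is to show that every fluid limit trajectory hits zero (or the equilibrium point) in finite time and stays there, which by the standard fluid-stability machinery (Dai-type arguments adapted to the countable Markov chain setting) yields positive recurrence for all large $r$. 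The priority structure of LAP must be respected here: one shows that the fluid trajectories, whatever the scheduling, still dissipate workload, since the LAP discipline keeps servers busy whenever compatible work is present.

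For part (ii), the sharp $r^{1/2+\epsilon}$-scale tightness, I would take the fluid-scale tightness (established alongside stability) as the starting point, exactly as the introduction signals. The goal is to control deviations of $(\Psi^r_{ij} - \psi^*_{ij} r, Q^r_i)$ from the equilibrium on the scale $r^{1/2+\epsilon}$. The natural strategy is a bootstrapping/localization argument: starting from the knowledge that the fluid-scaled deviations are $o(r)$ in steady state, one zooms in to finer scales and shows inductively that the deviations are in fact $O(r^{1/2+\epsilon})$. The main tool will be the analysis of the process under \emph{hydrodynamic} and \emph{local-fluid} scaling—that is, centering at the (random, but concentrated) current state and rescaling space and time so that the centered process again satisfies an approximate fluid-model equation, but now capturing second-order behavior. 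One expects that on these intermediate scales the drift toward equilibrium is linear (a stable linear ODE dictated by the priority structure), so that excursions of size $s \gg r^{1/2+\epsilon}$ are exponentially unlikely; combining this with the stationarity of the distribution and a union/Borel–Cantelli-type bound over scales should force the scaled stationary deviations to vanish in distribution.

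A crucial structural point exploited throughout is the priority ordering and Assumption~\ref{ass:rho}: all pools $j < J$ are fully occupied at equilibrium while pool $J$ has slack. This lets one handle the activities in priority order, treating the highest-priority activities as nearly deterministic (their servers are essentially always busy) and propagating the control of fluctuations down the priority list. For each activity $(ij)$, the deviation $\Psi^r_{ij} - \psi^*_{ij} r$ should be expressible, via the flow-conservation relations, in terms of higher-priority deviations plus a martingale term whose fluctuations are of order $r^{1/2}$ (the natural diffusion scale for Poisson arrival/service counting processes). The $r^\epsilon$ cushion is what absorbs the logarithmic and boundary-layer losses incurred at each level of this induction.

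The hard part will be the hydrodynamic/local-fluid analysis in part (ii): one must show that, after centering the stationary process at a typical point and rescaling, the resulting trajectories are uniformly well-approximated by a linearized fluid model with a strictly stabilizing drift, and that the approximation error and the probability of large excursions can be controlled \emph{uniformly over the stationary distribution}. The delicate issue is the interaction between the boundary (where queues hit zero or pools become full) and the priority-driven switching of the scheduling/routing rules: near the boundary the dynamics are non-smooth, and one must verify that the process does not spend anomalously long times in these boundary layers, nor that rare large deviations of the martingale terms accumulate across the many activities and the long time horizons forced by stationarity. Making the excursion estimates strong enough (exponential in the scale, or at least summable over a dyadic sequence of scales) while keeping them uniform in $r$ is, I expect, the central technical obstacle.
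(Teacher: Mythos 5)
Your plan for part (i) is essentially the paper's own proof: the workload $W^r(t)=\sum_i \nu_i x^r_i(t)$ is exactly the Lyapunov quantity used (Theorem~\ref{thm-fluid-stability} works with $\CL^r=[W^r]^2$, and the drift estimate, Lemma~\ref{lem-for-lyapunov}, comes from fluid models with large/infinite queues showing all pools become fully occupied), yielding positive recurrence together with the fluid-scale tightness that serves as the launching point for part (ii).

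For part (ii) you have assembled the right scalings but not the engine of the proof, and the engine you propose instead is a genuinely different, undeveloped one. You suggest that excursions above $r^{1/2+\epsilon}$ are ``exponentially unlikely'' and that a union/Borel--Cantelli bound over scales, combined with stationarity, should finish; you then yourself flag the uniformity of such excursion estimates over the stationary distribution, and the non-smooth boundary/priority switching, as the unresolved central obstacle. The paper never needs exponential excursion bounds, and it resolves the stationarity issue by a mechanism absent from your proposal: (a) by fluid tightness, the stationary state lies within $g(r)=o(r)$ of equilibrium with probability $\ge 1-\delta$, see \eqref{eq-sub-r}; (b) one runs the stationary process over $[0,T\log r]$, cut into $\log r$ intervals of length $T$; (c) the Koml\'os--Major--Tusn\'ady-type strong approximation (Proposition~\ref{thm:strong approximation-111}) makes the driving-process deviations $o(r^{1/2+\epsilon/2})$ \emph{simultaneously over the whole $T\log r$ horizon} --- this, not a per-level loss in an induction, is where the $\epsilon$ cushion is spent: the noise is negligible relative to any scale $h(r)\ge r^{1/2+\epsilon}$; (d) on each subinterval the hydrodynamic and local-fluid limits (Theorem~\ref{hydro-behavior}, Corollary~\ref{close-lf}) then give a \emph{deterministic multiplicative contraction} $|F^r(iT)|\le 2\delta_2|F^r((i-1)T)|$ with $2\delta_2<\delta_6<e^{-1/2+\epsilon}$, valid whenever $|F^r((i-1)T)|\in[r^{1/2+\epsilon},g(r)]$, while once the deviation dips below $r^{1/2+\epsilon}$ it stays below $3Cr^{1/2+\epsilon}$; (e) since $g(r)\delta_6^{\log r}<r^{1/2+\epsilon}$, the deviation must dip below $r^{1/2+\epsilon}$ within the $\log r$ intervals, so $|F^r(T\log r)|\le 3Cr^{1/2+\epsilon}$ with high probability, see \eqref{eq-goal}; (f) the time-$(T\log r)$ state is again stationary, which closes the argument with an arbitrary (not exponential) probability bound $(1-\delta)(1-\delta_1)$.

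A second discrepancy: your plan to express each activity's deviation as ``higher-priority deviations plus an $O(r^{1/2})$ martingale,'' with errors accumulating down the priority list, is not how priorities are exploited. In the paper the priority structure enters only through the deterministic ODE analysis of the hydrodynamic and local-fluid models (Theorems~\ref{th-hydro-convergence}--\ref{thm:local fluid limit-general}): the induction over activities happens at the level of Lipschitz limit trajectories, where there is no stochastic error to accumulate, and all stochasticity is quarantined in the single strong-approximation step (c). Without step (c) and the $T\log r$ geometric-contraction scheme, your outline does not yet contain a route from ``the local drift is linearly stabilizing'' to a statement about the stationary distribution.
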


The proof is given in the rest of the paper, and consists roughly of two stages.
First, we study the process under the fluid 
scaling $r^{-1} (\Psi^r_{ij}(\cdot), Q^r_i(\cdot))$, which allows us to prove stability
 and statement (ii) for $\epsilon=1/2$. Then we need a more detailed analysis, involving
{\em hydrodynamic}  and {\em local-fluid} scaling of the process, to prove (ii) for any 
$\epsilon>0$.

Throughout the paper, we will use the following additional notation for the system variables.
For a system with parameter $r$, we denote:\\
$X^r_i(t)=\sum_j\Psi^r_{ij}(t)+Q^r_i(t)$ is the total number of customers of type $i$ in the system at time $t$;\\
$A^r_i(t)$ is the total number of customers of type $i$ exogenous arrivals into the system in interval $[0,t]$;\\
$D^r_{ij}(t)$ is the total number of customers of type $i$ that completed the service in pool $j$ (and departed the system) in interval $[0,t]$;\\
$\Xi^r_{ij}(t)$ is the total number of customers of type $i$ that entered service in pool $j$ in interval $[0,t]$.\\
There are some obvious relations between realizations of these processes: $Q^r_i(t) = Q^r_i(0) + A^r_i(t) - \sum_j \Xi^r_{ij}(t)$; $Q^r_i(t) >0$
implies $\sum_{i'} \Psi^r_{i'j}(t) = \beta_j r$ for each $j\in \CS(i)$; and so on.

We can and will assume that a random realization of the system with parameter $r$ is determined by its initial state and realizations of ``driving" unit-rate, mutually independent, Poisson process $\Pi_i^{(a)}(\cdot), i\in \CI$ and 
$\Pi_{ij}^{(s)}(\cdot), (ij)\in \CE$, as follows:
$$
A_i^r(t) = \Pi_i^{(a)}(\lambda_i r t), ~~~~D_{ij}^r(t)=\Pi_{ij}^{(s)}\Bigl(\mu_{ij} \int_0^t \Psi^r_{ij}(u) du \Bigr);
$$
the driving Poisson processes are common for all $r$.

\section{Fluid scaling}
\label{sec-fluid-scaling}
We begin by analyzing the LAP discipline on the fluid scale. Namely, consider the scaling
\begin{align*}
\Bigl(\psi^r_{ij}(t),q^r_i(t), x^r_i(t), a^r_i(t), d^r_{ij}(t),\xi^r_{ij}(t)\Bigr)\\
= \frac1r\Bigl (\Psi^r_{ij}(t),Q^r_i(t), X^r_i(t), A^r_i(t), D^r_{ij}(t),\Xi^r_{ij}(t)\Bigr).
\end{align*}

\begin{prop}\label{th-local-fluid-convergence}
Suppose $(\psi^r_{ij}(0),q^r_i(0)) \to (\psi_{ij}(0), q_i(0))$. Then, w.p.1,  for any subsequence $r \to \infty$ there exists a further subsequence along which $(\psi^r_{ij}(\cdot),q^r_i(\cdot), x^r_i(\cdot), a^r_i(\cdot), d^r_{ij}(\cdot),\xi^r_{ij}(\cdot))$ converges uniformly on compact sets to a set $(\psi_{ij}(\cdot), q_i(\cdot), x_i(\cdot), a_i(\cdot), d_{ij}(\cdot),\xi_{ij}(\cdot))$ of Lipschitz continuous functions satisfying conditions \eqref{eqn:fluid model}. The conditions involving derivatives are to be satisfied 
at all regular points of the limiting set of functions. (A time point $t\ge 0$ 
is {\em regular} if both minimum and maximum of any subset of component functions
have derivatives at $t$. All points $t\ge 0$ are regular, except a subset of zero
 Lebesgue measure.)
\end{prop}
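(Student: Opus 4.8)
The plan is to establish the fluid-limit characterization in Proposition~\ref{th-local-fluid-convergence} by the standard functional-law-of-large-numbers route: first extract u.o.c.-convergent subsequences via a tightness/compactness argument, then identify every limit as a solution of the fluid model \eqref{eqn:fluid model}. Concretely, I would begin from the representation of the prelimit processes in terms of the driving unit-rate Poisson processes, namely $A_i^r(t) = \Pi_i^{(a)}(\lambda_i r t)$ and $D_{ij}^r(t)=\Pi_{ij}^{(s)}(\mu_{ij}\int_0^t \Psi^r_{ij}(u)\,du)$. The functional strong law of large numbers for Poisson processes gives, w.p.1 and u.o.c., $\frac1r \Pi^{(a)}_i(\lambda_i r t) \to \lambda_i t$ and $\frac1r \Pi^{(s)}_{ij}(r s) \to s$; combined with the a priori bound $\Psi^r_{ij}(t) \le \beta_j r$, this shows the arrival and departure fluid-scaled processes are asymptotically Lipschitz with a common (random) modulus on each compact time interval, for all $r$ large.

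Next I would use these Lipschitz bounds to invoke the Arzel\`a--Ascoli theorem: on every compact $[0,T]$ the families $a^r_i(\cdot)$, $d^r_{ij}(\cdot)$ are uniformly bounded and equi-Lipschitz (off a null set), and the conservation relations $x^r_i = x^r_i(0) + a^r_i - \sum_j d^r_{ij}$, $\xi^r_{ij}$, $\psi^r_{ij}$, $q^r_i$ are all obtained from these by Lipschitz operations and the flow balances already listed (e.g.\ $Q^r_i(t)=Q^r_i(0)+A^r_i(t)-\sum_j\Xi^r_{ij}(t)$ and $\Psi^r_{ij}(t)=\Psi^r_{ij}(0)+\Xi^r_{ij}(t)-D^r_{ij}(t)$), so the whole vector is equi-Lipschitz and bounded. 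Hence along any subsequence there is a further subsequence converging u.o.c.\ to some Lipschitz-continuous limit $(\psi_{ij}(\cdot),q_i(\cdot),\dots)$. Passing to the limit in the conservation identities is immediate by u.o.c.\ convergence, and passing to the limit in the departure integrals uses $\frac1r D^r_{ij}(t) \to \mu_{ij}\int_0^t \psi_{ij}(u)\,du$, which follows from the Poisson SLLN together with u.o.c.\ convergence of $\psi^r_{ij}$ inside the (continuous) time-change (a random-time-change / continuous-mapping argument).

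The remaining and genuinely delicate step is to verify, at regular points, the \emph{dynamic} constraints that encode the LAP routing and scheduling priorities --- these are the nontrivial content of \eqref{eqn:fluid model}, beyond the flow balances. The obstacle is that routing and scheduling are defined by strict static priority among activities, so the limit must satisfy the corresponding complementarity/priority relations: e.g.\ a lower-priority activity $(ij)$ can accrue service only when all higher-priority pools an arriving customer would prefer are saturated, and a server pool below the lowest-priority one can idle only when the relevant higher-priority queues are empty. To pin these down I would work at a regular point $t$ (where all relevant minima and maxima of the component functions are differentiable, which excludes only a Lebesgue-null set of $t$) and argue by the usual fluid-limit bookkeeping: translate each prelimit inequality of the form ``$Q^r_i(t)>0$ implies $\sum_{i'}\Psi^r_{i'j}(t)=\beta_j r$ for $j\in\CS(i)$'' into its fluid counterpart by taking limits on the set where the limiting queue or idleness is strictly positive, and use the equi-Lipschitz bounds to control the entry process $\xi^r_{ij}$ so that the induced routing rates respect the priority order in the limit. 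I expect this priority-consistency verification to be where essentially all the work lies; the compactness and the balance-equation limits are routine, whereas correctly extracting the LAP priority structure in the limit --- and doing so uniformly over all the nested activities in the tree --- is the part that requires care.
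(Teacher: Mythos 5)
Your overall route---FSLLN for the driving Poisson processes, asymptotic Lipschitz bounds, Arzel\`a--Ascoli extraction of u.o.c.\ limits along subsequences, passage to the limit in the conservation identities, and verification of the priority conditions by examining prelimit trajectories near a regular point---is the same as the paper's, and it does deliver the convergence statement, the balance conditions \eqn{eqn-first}--\eqn{eqn-x}, and the complementarity/priority conditions \eqn{eqn-key1}--\eqn{eqn:last-1} (the paper likewise disposes of the latter by the ``small interval $[t,t+\delta]$, $r$ large'' argument, citing Mandelbaum--Stolyar).

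The gap is condition \eqn{eqn:last}, the minimum condition at points where $q_i(t)=0$ \emph{and} $\sum_k \psi_{kj}(t)=\beta_j$ hold simultaneously; your sketch never addresses it, and the device you propose (take limits on the set where the limiting queue or the limiting idleness is strictly positive) is vacuous exactly in this case, since neither quantity is strictly positive there and the prelimit system can oscillate between the two regimes, so no single prelimit implication pins down $\frac{d}{dt}\xi_{ij}(t)$. The paper proves \eqn{eqn:last} by a different, structural argument that your plan lacks: by the construction of the LAP priority assignment, every activity $(ij)$ is the lowest-priority activity either among the activities of customer class $i$ or among the activities of server pool $j$ (or both). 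At a regular point with $q_i(t)=0$ and pool $j$ full, regularity gives $\frac{d}{dt}q_i(t)=0$ and $\frac{d}{dt}\sum_k\psi_{kj}(t)=0$; substituting these into the flow balances \eqn{eqn-q} forces, in the first case, $\frac{d}{dt}\xi_{ij}(t)=\lambda_i-\sum_{(ij')<(ij)}\frac{d}{dt}\xi_{ij'}(t)$, and in the second case the analogous ``pool-side'' identity, while nonnegativity of the entry rates of the remaining lower-priority activities supplies the inequality showing that the expression so obtained is the smaller of the two---which is precisely \eqn{eqn:last}. Without this tree-structure observation (or some substitute for it), your bookkeeping scheme does not produce the min condition, which is the one genuinely non-routine item in \eqn{eqn:fluid model}.
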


The fluid model conditions are: 
\begin{subequations}\label{eqn:fluid model}
\begin{equation}\label{eqn-first}
q_i(t) \geq 0, ~~ \forall i \in \CI; \quad \psi_{ij}(t) \geq 0, ~~ \forall (ij) \in \CE; \quad \sum_i \psi_{ij}(t) \leq \beta_j, ~~ \forall j \in \CJ
\end{equation}
\begin{equation}
a_i(t) = \lambda_i t, ~~ \forall i \in \CI; \qquad d_{ij}(t) = \int_0^t\mu_{ij}\psi_{ij}(s) ds, ~~ \forall (ij) \in \CE
\end{equation}
\begin{align}\label{eqn-q}
&q_i(t) = q_i(0) +  a_i(t) - \sum_j \xi_{ij}(t),~~ \forall i \in \CI;\\
&\psi_{ij}(t) =\psi_{ij}(0) +  \xi_{ij}(t) - d_{ij}(t),~~ \forall (ij) \in \CE\notag
\end{align}
\begin{equation}\label{eqn-x}
x_i(t) = q_i(t) + \sum_j \psi_{ij}(t) = x_i(0) + \lambda_i t - \sum_j \int_0^t \mu_{ij}\psi_{ij}(s) ds,~~ \forall i \in \CI
\end{equation}
\begin{equation}\label{eqn-key1}
\sum_i \psi_{ij}(t) = \beta_j, ~~ \text{whenever $q_{i'}(t)>0$ for at least one $i' \in \CC(j)$}
\end{equation}
\begin{equation}\label{eqn-key2}
\frac{d}{dt}\xi_{ij}(t) = 0 , ~~ \text{whenever $q_{i'}(t)>0$ for at least one $i' \in \CC(j)$, $i'<i$}
\end{equation}
\begin{equation}\label{eqn-key3}
\frac{d}{dt}\xi_{ij}(t) = 0 , ~~ \text{whenever $\sum_{k} \psi_{kj'}(t) < \beta_{j'}$ for at least one $(ij') < (ij)$}
\end{equation}
\begin{multline}\label{eqn:last-2}
\frac{d}{dt} \xi_{ij}(t) = \sum_{i'} \mu_{i'j} \psi_{i'j}(t) -  \sum_{(i'j) < (ij)} \frac{d}{dt} \xi_{ij'}(t),\\
\text{whenever $q_i(t)>0$ (and then necessarily $\sum_{k} \psi_{kj}(t) = \beta_j$)}
\end{multline}
\begin{multline}\label{eqn:last-1}
\frac{d}{dt} \xi_{ij}(t) = \lambda_{i} - \sum_{(ij') < (ij)} \frac{d}{dt} \xi_{ij'}(t),\\
\text{whenever $\sum_{k} \psi_{kj}(t) < \beta_j$ (and then necessarily $q_i(t)=0$)}
\end{multline}
\begin{multline}\label{eqn:last}
\frac{d}{dt} \xi_{ij}(t) = \min \left(\lambda_{i} - \sum_{(ij') < (ij)} \frac{d}{dt} \xi_{ij'}(t), \right.\\
\left.\sum_{i'} \mu_{i'j} \psi_{i'j}(t) -  \sum_{(i'j) < (ij)} \frac{d}{dt} \xi_{ij'}(t) \right)\\
\text{whenever $q_i(t)=0$ and $\sum_{k} \psi_{kj}(t) = \beta_j$.}
\end{multline}
\end{subequations}

\begin{proof}
[Proof of Proposition~\ref{th-local-fluid-convergence}]
The proof of convergence fact and
of the basic conditions
\eqn{eqn-first}-\eqn{eqn-x} of the limit,
is very standard. Indeed,
it follows from the Functional Strong Law of Large Numbers (FSLLN) for the driving processes,
and the scaling applied, that w.p.1 each component function is asymptotically
Lipschitz. For example, for each scaled departure process we have: 
w.p.1, for a fixed large $C>0$ and any $0\le t_1 < t_2 <\infty$, 
$$
\limsup_{r\to\infty} d^r_{ij}(t_2)-d^r_{ij}(t_1) < C(t_2-t_1).
$$
This implies that, w.p.1 any subsequence of $r$ has a further
subsequence along which a u.o.c. convergence $d^r_{ij}(\cdot)\to d_{ij}(\cdot)$ holds,
where $d_{ij}(\cdot)$ is Lipschitz. Similar convergence property 
holds for each arrival process. 
From here we obtain the convergence (along a subsequence) for all other components.
Then, relations \eqn{eqn-first}-\eqn{eqn-x} are inherited from the corresponding
conservation laws for the pre-limit trajectories.

Properties \eqn{eqn-key1}-\eqn{eqn:last-1} easily follow from the priority rule
of LAP;
it suffices to consider the behavior of pre-limit trajectories in a
small time interval $[t,t+\delta]$ when $r$ sufficiently large.
(See e.g. \cite[Theorem 1]{Mandelbaum_Stolyar} for this type of argument.)

Finally, to show \eqn{eqn:last} we recall that, by the priority assignment procedure,
for the activity $(ij)$:
either $(ij)$ has the lowest priority among activities associated with customer class $i$
or $(ij)$ has the lowest priority among activities associated with server pool $j$ (or both).
Taking into account that point $t$ is regular (which in particular implies
$q'_i(t)=0$ and $(d/dt) \sum_k \psi_{kj}(t) = 0$), we easily see that
in the former case the only possibility is that
$$
\frac{d}{dt} \xi_{ij}(t) = \lambda_{i} - \sum_{(ij') < (ij)} \frac{d}{dt} \xi_{ij'}(t) \le
\sum_{i'} \mu_{i'j} \psi_{i'j}(t) -  \sum_{(i'j) < (ij)} \frac{d}{dt} \xi_{ij'}(t),
$$
and in the latter case we must have
$$
\frac{d}{dt} \xi_{ij}(t) = 
\sum_{i'} \mu_{i'j} \psi_{i'j}(t) -  \sum_{(i'j) < (ij)} \frac{d}{dt} \xi_{ij'}(t)
\le \lambda_{i} - \sum_{(ij') < (ij)} \frac{d}{dt} \xi_{ij'}(t).
$$
This implies \eqn{eqn:last}. We omit further details of the proof which are, again, 
rather standard.
\end{proof}

We call any Lipschitz solution $(\psi_{ij}(\cdot), q_i(\cdot), x_i(\cdot), a_i(\cdot), d_{ij}(\cdot),\xi_{ij}(\cdot))$
of \eqref{eqn:fluid model}  a \emph{fluid model} of the system with initial state $(\psi_{ij}(0), q_i(0))$; a set $(\psi_{ij}(\cdot), q_i(\cdot))$,
which is a projection of a fluid model we often call a fluid model as well.

\begin{remark}\label{rem:uniqueness}
It will not be important for the results in the paper whether the fluid model with given initial conditions is unique; all that will matter is the long-term behavior of all fluid models with given initial conditions. 
\end{remark}

\begin{prop}\label{th-fluid-drains}
%Suppose Assumption \ref{ass:rho} holds.
For any $\epsilon' > 0$ and any $K>0$ there exists a finite time $T=T(K)$ such that all fluid models whose starting state satisfies $\abs{(\psi_{ij}(0), q_i(0))} \le K$ have $\sum_i \psi_{ij}(t) = \beta_j,~\forall j < J$, $q_i(t) = 0,~\forall i \in \CI$, and  $\abs{\psi_{ij}(t) - \psi^*_{ij}}<\epsilon'$ for all $(ij) \in \CE$, for all $t \geq T(K)$.
\end{prop}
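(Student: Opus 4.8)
The plan is to argue by induction on the activities taken in their priority order $e_1 < e_2 < \cdots < e_m$ (where $m=|\CE|$), peeling off the tree one leaf at a time exactly as in the priority-assignment and in the recursive construction of the equilibrium. Recall that
\[
\lambda_{ij} = \min\bigl(\,\lambda_i - \textstyle\sum_{(ij')<(ij)}\lambda_{ij'},\ \mu_{ij}(\beta_j - \sum_{i'<i}\psi^*_{i'j})\,\bigr),
\]
so each activity $(ij)$ is either \emph{demand-limited} (first term smaller, so $(ij)$ exhausts the residual demand of class $i$; this is precisely the step where the current customer leaf is class $i$) or \emph{capacity-limited} (second term smaller, so $(ij)$ fills the residual capacity of pool $j$; this is the step where the current leaf is server pool $j$). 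The inductive claim would be: for each $k$ there is a finite $T_k=T_k(K)$ after which the occupancies $\psi_{ij}$ for $(ij)\le e_k$, together with the residual demands of the classes and residual capacities of the pools exposed by removing $e_1,\dots,e_k$, have converged to their equilibrium values (the aggregate quantities exactly, the individual $\psi_{ij}$ to within a tolerance shrinking with $k$) and stay there.

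Before the induction I would record the global workload identity. With $W(t)=\sum_i \nu_i x_i(t)$, equation \eqref{eqn-x}, the relations $\nu_i\mu_{ij}=\alpha_j/\beta_j$ and the computation \eqref{eqn:workload} give, at regular points,
\[
\frac{d}{dt}W(t) = \rho - \sum_j \alpha_j\frac{\sum_i\psi_{ij}(t)}{\beta_j} = -(1-\rho) + \sum_j \alpha_j\frac{\beta_j-\sum_i\psi_{ij}(t)}{\beta_j}.
\]
Together with Proposition~\ref{th-local-fluid-convergence} this confines the state to the ball of radius $K+Lt$ ($L$ the common Lipschitz constant) and shows $W$ has negative drift whenever the weighted idleness on the right is below $1-\rho$; this is the engine driving queues down, since a positive queue $q_{i'}$ forces every pool in $\CS(i')$ to be full by \eqref{eqn-key1}. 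It does not by itself yield drainage (a pool may idle while $W$ is large if its classes happen to carry no queue, so $W$ need not be monotone), which is exactly why the tree/priority induction is required.

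For the inductive step I would analyze $e_{k+1}=(ij)$ assuming the higher-priority activities are already at equilibrium. Using \eqref{eqn-key2}, \eqref{eqn-key3} and \eqref{eqn:last-2}--\eqref{eqn:last}, the residual demand seen by class $i$ and the residual capacity of pool $j$ after the higher-priority activities equal, up to vanishing terms, the constants in the definition of $\lambda_{ij}$. In the demand-limited case (customer leaf $i$, so $(ij)$ is the lowest-priority class-$i$ activity) I would show $q_i\to0$ in finite time: by the inductive hypothesis the higher-priority class-$i$ activities already serve at total rate $\lambda_i-\lambda_{ij}$, while $q_i>0$ forces pool $j$ full and, via \eqref{eqn:last-2}, lets $(ij)$ serve class $i$ above the equilibrium rate $\lambda_{ij}$, pushing $\sum_j\mu_{ij}\psi_{ij}$ past $\lambda_i$ and giving uniform negative drift of $x_i$. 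In the capacity-limited case (server leaf $j$) I would show $\sum_i\psi_{ij}\to\beta_j$ exactly in finite time, driven by the residual arrival stream assigned to $(ij)$ through \eqref{eqn:last-1}. Once these aggregate conditions hold, the occupancies of the newly closed pool relax toward $(\psi^*_{ij})$ of \eqref{eqn:LAP equilibrium} at an exponential rate set by the $\mu_{ij}$, so they enter any $\epsilon'$-ball in an additional time bounded for fixed $K$; the finitely many steps then compose into a single $T(K)$, with pool $J$ the leftover demand-limited sink where $\sum_i\psi^*_{iJ}<\beta_J$, consistent with Assumption~\ref{ass:rho} and with $\rho<1$ (Assumption~\ref{CRP:2}).

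The main obstacle is the bookkeeping that keeps the induction honest: one must show that equilibrium for $e_1,\dots,e_k$ is \emph{invariant} (these quantities are not disturbed while lower-priority activities equilibrate), that the residual-demand and residual-capacity processes genuinely converge to the constants in the $\lambda_{ij}$ recursion, and that the specific pools $j<J$ fill \emph{exactly} in finite time while pool $J$ absorbs the slack. Because the fluid model need not be unique and its components are only Lipschitz, all of this must be carried out at regular points via $\limsup$/$\liminf$ (Dini-derivative) arguments rather than by solving ODEs, and every estimate must be uniform over starting states with $\abs{(\psi_{ij}(0),q_i(0))}\le K$ so that the resulting times $T_k(K)$, and hence $T(K)$, depend only on $K$.
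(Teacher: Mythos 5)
Your proposal is correct and follows essentially the same route as the paper's own (sketched) proof: induction on the activities in priority order, with the customer-class-leaf (demand-limited) and server-pool-leaf (capacity-limited) cases handled by drift estimates bounded away from zero, uniformly over initial states in the $K$-ball, relying on Assumption~\ref{ass:rho} exactly as you do. The workload identity you record as a preamble does not appear in the paper's proof of this proposition (it is used instead in the stability argument of Lemma~\ref{lem-for-lyapunov} and Theorem~\ref{thm-fluid-stability}), but, as you yourself note, it is inessential here.
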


\begin{proof}[Sketch of proof]
For the highest priority activity $(1j)$ there are two cases.\\
Case a: Type $1$ is a leaf. 
In this case $j$ is the unique server to which type 1 jobs are allowed to go, and they have the highest priority there.
Pick a small $\delta>0$. After a finite time (uniformly bounded above, across
all starting states as in the proposition statement),
the condition $\psi_{1j}(t)\ge \psi_{1j}^* - \delta$ must hold, because
$\psi_{1j}(t)< \psi_{1j}^* - \delta$ implies that $(d/dt)\psi_{1j}(t)$ is positive 
and bounded away from $0$. After such time, $q_1(t)>0$ implies 
$\sum_i' \psi_{i'j}(t)=\beta_j$ and (recall that $\delta$ is small) 
 $\lambda_1\le \mu_{1j}\psi_{1j}(t) -\delta_1$ for some $\delta_1>0$; and therefore
$(d/dt)q_1(t)\le -\delta_1$. We conclude that after a finite time (uniformly bounded above)
we must have $q_1(t)=0$. This in turn implies that $(d/dt)\psi_{1j}(t)$ is negative
and bounded away from $0$ as long as $\psi_{1j}(t)> \psi_{1j}^* + \delta$.
Thus, $|\psi_{1j}(t)-\psi_{1j}^*|\le \delta$ and $q_1(t)=0$ after a bounded time.\\
Case b: Pool $j$ is a leaf. Then Assumption~\ref{ass:rho} implies $\psi_{1j}^*=\beta_j$
and $\lambda_1>\mu_{1j} \beta_j$. 
In this case, $\psi_{1j}(t)=\psi^*_{1j}$ starting at some time (that is bounded uniformly on initial states), simply because $(d/dt)\psi_{1j}(t) \ge \lambda_1 - \mu_{1j} \beta_j >0$
as long as $\psi_{1j}(t)<\beta_j$.\\
We see that, in either case a or b, for arbitrarily small $\delta>0$, there exists $T_1=T_1(\delta)$ such that $|\psi_{1j}(t)-\psi_{1j}^*|<\delta$.\\
We proceed by induction on the activity priorities and, using Assumption~\ref{ass:rho}, easily establish analogous properties for every activity $(i'j')$.
This implies the result. We omit details.
\end{proof}

\begin{theorem}
\label{thm-fluid-stability}
%Let $\rho < 1$.
For all sufficiently large $r$, the LAP discipline stabilizes the network (in the sense of positive recurrence of the underlying Markov process). Moreover, the sequence of invariant distributions of $(\psi^r_{ij}, q^r_i)$ is tight, and the invariant distributions converge weakly to the point mass at the equilibrium point.
\end{theorem}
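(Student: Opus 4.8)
The plan is to combine the standard fluid-stability machinery with the drainage result of Proposition~\ref{th-fluid-drains}. The theorem asserts three things: (a) positive recurrence of the Markov chain for large $r$; (b) tightness of the invariant distributions of the fluid-scaled variables $(\psi^r_{ij}, q^r_i)$; and (c) weak convergence of these invariant distributions to the point mass at the equilibrium point $(\psi^*_{ij}, 0)$. The key input is that every fluid model reaches and stays at the equilibrium after a finite time $T(K)$ depending only on the size $K$ of the initial state.

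\textbf{Stability.} First I would prove positive recurrence via the standard fluid-limit criterion (in the spirit of Dai's approach, and as used for many-server systems). The natural Lyapunov-type functional is the total number of customers in the system, $\sum_i X_i^r(t)$, or equivalently a bound on $|(\Psi^r_{ij}, Q^r_i)|$. Proposition~\ref{th-fluid-drains} says that, starting from fluid-scaled norm $1$, all fluid models are at the equilibrium (with all queues empty) by a fixed finite time $T = T(1)$. By the fluid-limit-to-stability theorem, this fluid drainage property implies that there exists a finite time horizon and a constant $\theta < 1$ such that, for all sufficiently large $r$ and all initial states of large enough norm $n$, the expected norm after time $\theta n$ has contracted by a definite factor. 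This multiplicative drift (or the associated hitting-time bound on a bounded set) yields positive recurrence of the countable Markov chain for all sufficiently large $r$. The one point requiring care is that the drainage in Proposition~\ref{th-fluid-drains} is stated for {\em all} fluid models with a given initial condition (recall Remark~\ref{rem:uniqueness}, uniqueness is not claimed), so I would phrase the fluid-limit criterion in the robust form that only needs {\em every} fluid limit to drain, which is exactly what we have.

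\textbf{Tightness of invariant distributions.} Given positive recurrence, let $\pi^r$ denote the invariant distribution of $(\psi^r_{ij}, q^r_i)$. To show tightness I would use the standard argument that stationarity plus the uniform fluid drainage forces the stationary mass to concentrate on a bounded set: if $X^r(0)$ is distributed according to $\pi^r$, then by stationarity $X^r(t)$ has the same law for all $t$, while the drift/drainage estimate bounds $\mathbb{E}_{\pi^r}|X^r|$ uniformly in $r$. Concretely, one shows $\sup_r \mathbb{E}_{\pi^r}\|(\psi^r_{ij}, q^r_i)\| < \infty$ by integrating the drift inequality against the stationary measure, and then Markov's inequality gives tightness. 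This step is routine once the drift estimate from the stability proof is in hand.

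\textbf{Convergence to the point mass.} For the final claim, I would take any weak subsequential limit $\pi$ of $\{\pi^r\}$ (which exists by tightness) and show $\pi = \delta_{(\psi^*_{ij},\, 0)}$. Start a stationary system from $\pi^r$; by Proposition~\ref{th-local-fluid-convergence} the fluid-scaled trajectory converges (along a subsequence, w.p.1) to a fluid model whose initial distribution is $\pi$. By stationarity, the {\em time-$t$} marginal of this limiting fluid model must also have law $\pi$ for every $t \geq 0$. But by Proposition~\ref{th-fluid-drains}, after a deterministic finite time the fluid model sits at the equilibrium point regardless of its (bounded, $\pi$-almost-surely) initial condition; hence the time-$t$ marginal converges to $\delta_{(\psi^*_{ij},\,0)}$ as $t \to \infty$. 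Since that marginal equals $\pi$ for all $t$, we conclude $\pi = \delta_{(\psi^*_{ij},\,0)}$. As the limit is the same for every subsequence, the full sequence $\pi^r$ converges weakly to the equilibrium point mass.

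\textbf{The main obstacle} I expect is not any single step in isolation but the careful transfer from the ``all fluid models drain'' statement to a genuine drift inequality for the prelimit Markov chains that is uniform in $r$ and strong enough for positive recurrence. Because the fluid model need not be unique, I must use a version of the stability argument that tolerates set-valued fluid limits, and I must ensure the drainage time $T(K)$ and the contraction factor are uniform across the sequence of systems; verifying this uniformity (and the attendant uniform-integrability needed to pass expectations through the fluid limit) is where the real work lies.
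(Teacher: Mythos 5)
Your parts (b) and (c) --- tightness via a uniform stationary moment bound, and the subsequential-limit argument showing any weak limit of the invariant distributions must be the point mass at $(\psi^*_{ij},0)$ --- are essentially the paper's own argument. The genuine gap is in part (a), stability, and it is exactly the issue your ``main obstacle'' paragraph gestures at without resolving. Proposition~\ref{th-fluid-drains} concerns fluid models arising from the many-server scaling: initial states whose norm is $O(r)$, i.e.\ bounded by $K$ \emph{after} dividing by $r$. Positive recurrence of the countable Markov chain for a \emph{fixed} large $r$ requires a drift or hitting-time estimate valid from \emph{all} states, including those whose queues are enormous relative to $r$. Dai's fluid-limit criterion for a fixed-$r$ chain would use fluid limits under scaling by the initial norm $n\to\infty$ with $r$ fixed, and these are not the fluid models of Propositions~\ref{th-local-fluid-convergence} and~\ref{th-fluid-drains} at all (in that regime the server occupancies vanish after scaling and only the queues survive). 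So ``every fluid limit drains, by Proposition~\ref{th-fluid-drains}'' does not apply to the objects that Dai's theorem needs, and no amount of care about non-uniqueness fixes this. The paper fills precisely this hole with Lemma~\ref{lem-for-lyapunov} and Proposition~\ref{th-fluid-infinite-init}: fluid limits are taken with initial conditions in the compactification $\bar\BR$ (queue components allowed to equal $+\infty$), and one proves --- using Assumption~\ref{ass:rho} and the LAP structure --- that whenever \emph{some} queue is large, \emph{all} server pools become fully occupied within a bounded time, uniformly over all such arbitrarily large initial states. That uniform full-occupancy statement is the real content behind stability, and your proposal contains no substitute for it.

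A second, related defect: the quantity you propose as Lyapunov functional, the total customer count $\sum_i X^r_i$, does not have the required drift property. When all pools are busy, the total departure rate $\sum_{ij}\mu_{ij}\Psi^r_{ij}$ can still be smaller than the total arrival rate $\sum_i \lambda_i r$ if the servers happen to be occupied by ``slow'' customer--server combinations. The paper instead uses the dual-weighted workload $W^r=\sum_i \nu_i x^r_i$ and the Foster--Lyapunov function $[W^r]^2$: because $\nu_i\mu_{ij}=\alpha_j/\beta_j$ on basic activities, full occupancy of all pools makes the (fluid-scaled) workload departure rate exactly $1$ regardless of the customer mix, while workload arrives at rate $\rho<1$ by \eqref{eqn:workload}. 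This choice of weights is what converts the output of Lemma~\ref{lem-for-lyapunov} (``all pools busy after time $T_1$'') into a genuinely negative drift; with unweighted counts the drift argument fails.
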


Before we proceed with the proof, we need the following lemma.
\begin{lemma}\label{lem-for-lyapunov}
%Let $\rho < 1$.
There exists $T_1 > 0$ such that for any $T_2 > T_1$ there exists a sufficiently large $C=C(T_2)$ for which the following holds. For any $\epsilon > 0$, 
\[
\BP \left\{ \bigl\lvert \sum_{(ij)} \nu_i (d^r_{ij}(T_2)-d^r_{ij}(T_1)) - (T_2-T_1) \bigr\rvert \geq \epsilon \right\} \to 0,
\]
as $r\to\infty$, uniformly on initial states with $\max_{i \in \CI} q^r_i(0) \geq C$.
\end{lemma}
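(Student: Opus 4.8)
The plan is to recast the claim as a statement about the total system workload and then show that a large initial queue forces all server pools to remain essentially saturated throughout $[T_1,T_2]$. I would introduce the scaled workload $W^r(t)=\sum_i \nu_i x^r_i(t)$ and rewrite the target quantity: using $x^r_i(t)=x^r_i(0)+a^r_i(t)-\sum_j d^r_{ij}(t)$ one obtains
\[
\sum_{(ij)}\nu_i\bigl(d^r_{ij}(T_2)-d^r_{ij}(T_1)\bigr)=\bigl(W^r(T_1)-W^r(T_2)\bigr)+\sum_i\nu_i\bigl(a^r_i(T_2)-a^r_i(T_1)\bigr).
\]
By the FSLLN for the arrival processes (which is independent of the initial state) the last sum converges in probability to $\sum_i\nu_i\lambda_i(T_2-T_1)=\rho(T_2-T_1)$, using \eqref{eqn:workload}. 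Hence it suffices to prove that $W^r(T_1)-W^r(T_2)\to(1-\rho)(T_2-T_1)$ in probability, uniformly over initial states with $\max_i q^r_i(0)\ge C$.

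Next I would observe that one inequality is \emph{free}. The instantaneous drift of $W^r$ equals $\rho-\sum_j\frac{\alpha_j}{\beta_j}\frac1r\sum_i\Psi^r_{ij}\ge \rho-1$, because $\frac1r\sum_i\Psi^r_{ij}\le\beta_j$ and $\sum_j\alpha_j=1$; a standard martingale/FSLLN estimate then gives $W^r(T_1)-W^r(T_2)\le(1-\rho)(T_2-T_1)+o_P(1)$ \emph{regardless} of the initial state. The content of the lemma is therefore the matching lower bound, namely that the workload drains at essentially the maximal rate $1-\rho$. Since
\[
1-\sum_j\tfrac{\alpha_j}{\beta_j}\tfrac1r\textstyle\sum_i\Psi^r_{ij}=\sum_j\tfrac{\alpha_j}{\beta_j}\bigl(\beta_j-\tfrac1r\textstyle\sum_i\Psi^r_{ij}\bigr)
\]
is a sum of nonnegative terms, this maximal drain rate is attained only when \emph{every} pool is (nearly) fully occupied on $[T_1,T_2]$.

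The heart of the argument is the corresponding fluid-model fact: there is $T_1$ such that for every $T_2>T_1$ there is $C$ for which any fluid model with $\max_i q_i(0)\ge C$ satisfies $\sum_i\psi_{ij}(t)=\beta_j$ for all $j$ and all $t\in[T_1,T_2]$. I would prove this by induction on the LAP priority order, in the same spirit as Proposition~\ref{th-fluid-drains}: a large queue at a high-priority class keeps the pools in its service set saturated, and, crucially, Assumption~\ref{ass:rho} (no activity carries slack at the operating point) ensures that the lower-priority flow displaced from those saturated pools cannot be absorbed elsewhere and must therefore saturate the next pools in the tree, propagating saturation across the whole tree. Here $T_1$ is a uniform ``propagation time'' depending only on the tree and the rates, while $C=C(T_2)$ is chosen large enough that, since $W$ can fall by at most $(1-\rho)T_2$, no pool can empty before time $T_2$.

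Finally, I would transfer this to the prelimit by contradiction together with Proposition~\ref{th-local-fluid-convergence}: if the lower-bound direction failed uniformly, one could extract $r_n\to\infty$ and initial states with $\max_i q^{r_n}_i(0)\ge C$ violating it with probability bounded away from $0$, and along a subsequence with \emph{bounded} fluid-scaled initial states the resulting fluid limit would contradict the fluid-model fact above. The main obstacle I anticipate is the remaining case of \emph{unbounded} initial states, $|(\psi^{r_n}(0),q^{r_n}(0))|\to\infty$, where Proposition~\ref{th-local-fluid-convergence} does not apply directly; I would handle it by a monotone comparison with the fully saturated system (all queues infinite), in which all pools are full and $W$ drains at exactly rate $1-\rho$, combined with the bounded-rate estimate guaranteeing the comparison remains valid on $[T_1,T_2]$ once $C$ is large (alternatively, via a rescaled, hydrodynamic-type limit). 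Combining the free upper bound with this lower bound yields the lemma.
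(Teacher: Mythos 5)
Your reduction is correct and, in substance, coincides with the paper's own proof: after the workload identity and the FSLLN for the arrivals, everything rests on the dual relations $\nu_i\mu_{ij}=\alpha_j/\beta_j$, $\sum_j\alpha_j=1$, which convert ``all pools fully occupied on $[T_1,T_2]$'' into ``workload departs at rate exactly $1$''; and your ``heart of the argument'' (saturation propagates through the tree by induction on LAP priorities, Assumption~\ref{ass:rho} being what forces \emph{all} pools to fill, with $C(T_2)$ chosen through the bounded drain rate so that large queues cannot empty before $T_2$) is exactly the content, and the proof idea, of the paper's Proposition~\ref{th-fluid-infinite-init}. Your splitting into a ``free'' upper bound plus a lower bound is only cosmetically different: the paper obtains the two-sided statement at once, since convergence of the occupancies to full occupancy yields convergence of the departure integrals, not merely a one-sided bound.

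The genuine gap is exactly where you anticipate it, and your proposed fix does not work. The initial states covered by the lemma need only have \emph{one} fluid-scaled queue $\geq C$; the remaining queues may be empty. Under LAP, routing and scheduling decisions depend on the queues only through which of them are positive, so a system in which only some queues are huge evolves genuinely differently from the ``fully saturated'' system with all queues infinite: in the former, pools all of whose customer classes have empty queues can have idle servers, in the latter they never do. Hence there is no coupling or stochastic domination in the direction you need (multiclass priority systems of this kind are not monotone), and for the correct comparison object -- the system in which only the \emph{large} queues are set to $+\infty$ -- the assertion ``all pools are full'' is not trivial at all: it is precisely the statement that must be proved, after a finite propagation time, using Assumption~\ref{ass:rho}. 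This is what the paper's Proposition~\ref{th-fluid-infinite-init} supplies: a fluid-limit theorem for deterministic initial states converging coordinatewise in the compactification $\bar\BR$, whose limits are fluid models in which the queues in $\CI^\infty$ stay at $+\infty$ for all time while the finite coordinates obey \eqref{eqn:fluid model}, and for which full occupancy of every pool after a uniform finite time $T_1'$ is established. If you want to avoid extended fluid models, a correct elementary repair in the spirit of your idea does exist: couple the given system with the one in which every queue exceeding $Mr$ (with $M$ a constant multiple of $T_2$) is truncated to level $Mr$; since only positivity of queues matters, the two systems coincide until some truncated queue empties, which w.h.p. does not happen before $T_2$, and the truncated system has bounded fluid-scaled initial states, so your bounded-case argument via Proposition~\ref{th-local-fluid-convergence} applies to it. As written, however, the comparison with the all-queues-infinite system is the wrong object, and this step of your proof fails.
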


In turn, to prove this lemma, we will need to use fluid models with infinite initial states.
Note that we cannot appeal directly to the properties of ``standard" fluid models defined earlier, because we require convergence that is uniform in all large initial states.
So, we need the following version of a fluid limit result. 
We will use notation $\bar \BR = \BR\cup\{\infty\}$ for the the one-point 
compactification of $\BR$.

\begin{prop}\label{th-fluid-infinite-init}
Consider a sequence of fluid-scaled processes \linebreak
$(\psi^r_{ij}(\cdot), q^r_i(\cdot))$
with deterministic initial states such that 
$\abs{(\psi^r_{ij}(0), q^r_i(0))} = C'(r) \to \infty$ and
$$
(\psi^r_{ij}(0), q^r_i(0)) \to (\psi_{ij}(0), q_i(0)),
$$
where each $q^r_i(0)$ and $q_i(0)$ is viewed as an element of $\bar \BR$.
Partition the customer classes as $\CI = \CI^\infty \cup \CI^0$, 
where $q_i(0) = \infty$ for $i \in \CI^\infty$, and $q_i(0) < \infty$ for $i \in \CI^0$.
(Necessarily, $\CI^\infty$ is non-empty.)
Then, with probability 1, any subsequence of trajectories has a further subsequence which converges u.o.c. to a fluid model, satisfying same conditions as \eqn{eqn:fluid model}, except that
for all $i \in \CI^\infty$ the queue length $q_i(t)=\infty, \forall t\ge 0$. 
Moreover, all such fluid models are such that, uniformly on all of them, starting at some finite time $T'_1$, all server pools are fully occupied: $\sum_i \psi_{ij}(t)=\beta_j$, 
$t\ge T'_1$, $\forall j$.
\end{prop}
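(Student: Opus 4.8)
The plan is to treat the two assertions separately. The first is a routine adaptation of Proposition~\ref{th-local-fluid-convergence}, the only new feature being the diverging queues; the second, the uniform full-occupancy property, is the substantive part.

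For the fluid-limit assertion I would repeat the compactness argument of Proposition~\ref{th-local-fluid-convergence}. Since $0\le\psi^r_{ij}(0)\le\beta_j$, the occupancy coordinates are bounded, so the divergence $C'(r)\to\infty$ is carried entirely by the queues --- which is exactly why $\CI^\infty$ must be non-empty. For $i\in\CI^0$ every coordinate behaves as before and converges, along a subsequence, to a Lipschitz limit obeying \eqref{eqn:fluid model}. For $i\in\CI^\infty$ the point is that $q^r_i$ moves at a rate bounded uniformly in $r$: arrivals contribute rate $\lambda_i$, while the rate of service entries of type $i$ is at most the arrival rate plus the aggregate service capacity of the pools in $\CS(i)$, all $O(1)$ on the fluid scale. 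Hence, for each fixed $T$, $\inf_{t\le T} q^r_i(t)\ge q^r_i(0)-C_iT\to\infty$, giving $q_i(t)\equiv\infty$ u.o.c. The remaining relations are inherited from the prelimit conservation laws and the LAP priority rule precisely as before, every occurrence of the predicate ``$q_{i'}(t)>0$'' with $i'\in\CI^\infty$ being read as permanently true.

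For the full-occupancy assertion I would propagate fullness outward through the basic activity tree. Fix $i_0\in\CI^\infty$. As $q_{i_0}(t)=\infty>0$ for all $t$, condition \eqref{eqn-key1} already forces $\sum_i\psi_{ij}(t)=\beta_j$ for every $j\in\CS(i_0)$ and every $t\ge0$; this is the base case. The inductive step mirrors the reasoning of Proposition~\ref{th-fluid-drains}, but driven by the infinite backlog instead of the equilibrium target: once a pool $j$ is permanently full and, through the priority equations \eqref{eqn-key2}--\eqref{eqn-key3}, devotes its capacity to classes that preempt a neighboring class $i$, the queue $q_i$ must grow at a rate bounded away from $0$ and hence becomes positive after a finite, uniformly bounded time, whereupon \eqref{eqn-key1} fills every pool in $\CS(i)$. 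Because the tree is finite and connected these steps exhaust $\CI\cup\CJ$ in finitely many rounds, and since every drift rate involved is bounded below by a positive constant determined by $(\lambda_i,\mu_{ij},\beta_j)$ and by the fixed finite part of the initial data, the resulting time $T'_1$ is finite and may be chosen uniformly over all subsequential fluid models --- these share the same initial condition, with $q_i(0)$, $i\in\CI^0$, fixed. Assumption~\ref{ass:rho}, which guarantees that all activities are genuinely used, is what keeps the propagation from stalling and ensures it reaches the last pool $J$, the one not full at the equilibrium point.

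The hard part will be making the inductive step rigorous within the fluid equations: one must pin down, pool by pool, which customer classes absorb the service capacity under the LAP ordering, and verify that the starved classes are precisely those whose queues then build up at a strictly positive rate. The tree structure --- each edge removal disconnecting the graph --- is what keeps this book-keeping acyclic and terminating. A secondary difficulty is extracting a single $T'_1$ valid uniformly over all subsequential limits rather than a model-dependent time; this is handled by the uniform lower bounds on the drift rates noted above.
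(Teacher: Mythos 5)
Your first part (the compactness argument, the observation that the divergence is carried entirely by the queues, and the conclusion $q_i\equiv\infty$ u.o.c.\ for $i\in\CI^\infty$) is sound, and it matches the paper's intent; the paper in fact omits the proof entirely, remarking only that it is ``very similar to that of Proposition~\ref{th-fluid-drains} (and in fact simpler)'', with Assumption~\ref{ass:rho} singled out as the essential ingredient for the full-occupancy claim.

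The genuine gap is in your inductive step for full occupancy. You argue: a starved class $i$ has its queue grow at a rate bounded away from zero, $q_i$ becomes positive in finite time, ``whereupon \eqref{eqn-key1} fills every pool in $\CS(i)$.'' This causality is impossible, both in the prelimit and in the fluid model. Under LAP routing an arriving customer joins the queue only when \emph{no} pool in $\CS(i)$ has idle servers; this is exactly what \eqref{eqn-key1} encodes, so $q_i(t)>0$ already \emph{requires} every pool in $\CS(i)$ to be full at time $t$. Hence, as long as some pool $j''\in\CS(i)$ has idle capacity, $q_i$ is pinned at zero and cannot grow at all: the displaced arrivals of class $i$ flow into $j''$ through $\xi_{ij''}$, not into the queue. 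A positive queue is a \emph{consequence} of the pools in $\CS(i)$ being full, never the cause. What actually fills a not-yet-full pool is the rerouted arrival stream itself, and the step your proposal never carries out is extracting from Assumption~\ref{ass:rho} the \emph{strict} capacity-excess inequalities (of the type $\lambda_1>\mu_{1j}\beta_j$ used in Case b of the proof of Proposition~\ref{th-fluid-drains}): because no feasible allocation can carry the load with any activity set to zero, the load displaced onto the remaining pools strictly exceeds their residual service capacity, so $\sum_k\psi_{kj''}(t)$ has drift bounded away from zero until it reaches $\beta_{j''}$; only after that can the queue rise. You invoke Assumption~\ref{ass:rho} only as ``what keeps the propagation from stalling,'' but without these drift inequalities your propagation stalls at precisely the point where it is supposed to produce a newly full pool --- and this is the very step you yourself flag as ``the hard part.'' The paper's (omitted) argument is an induction on activity priorities, exactly as in Proposition~\ref{th-fluid-drains}, in which this drift estimate is the engine of each step; your outward-from-$\CI^\infty$ propagation could be repaired the same way, but that repair is the substance of the proof.
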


Proof of this result is very similar to that of Proposition~\ref{th-fluid-drains}
(and in fact simpler), so it is not spelled out here. 
We just note that Assumption~\ref{ass:rho} is essential in showing that {\em all}
server pools are occupied after a finite time. Without the assumption, we could still
show that the occupancy becomes {\em strictly greater} than at the equilibrium point,
and that would be enough for our purposes; however, it would make 
Proposition~\ref{th-fluid-infinite-init} statement and proof more cumbersome.

\begin{proof}[Proof of Lemma~\ref{lem-for-lyapunov}]
%[Sketch of proof]
Let us choose $T_1=2 T'_1$, where $T'_1$ is as in Proposition~\ref{th-fluid-infinite-init}.
 Now, if the lemma statement would not hold, then for some fixed $\epsilon'>0$ we could find a sequence
of systems with $\abs{(\psi^r_{ij}(0), q^r_i(0))} = C'(r) \to \infty$, such that 
\[
\limsup_r \BP \left\{ \bigl\lvert \sum_{(ij)} \nu_i (d^r_{ij}(T_2)-d^r_{ij}(T_1))   
- (T_2-T_1) \bigr\rvert \geq \epsilon' \right\} > 0.
\]
This, however, is impossible because, by Proposition~\ref{th-fluid-infinite-init}, 
 w.p.1 from any subsequence of $r$ we can find a further 
subsequence such that:
%$$
%\sup_{t\in [T_1,T_2]} \max_j \bigl\lvert \sum_i \psi^r_{ij}(t) - \beta_{j} \bigr\rvert \to 0,
%$$
\begin{align*}
& \sum_i \psi^r_{ij}(t) \to \sum_i \psi_{ij}(t)=\beta_j 
~~\mbox{uniformly in $[T_1,T_2]$},~~\forall j,\\
& d^r_{ij}(T_2)-d^r_{ij}(T_1) \to \int_{T_1}^{T_2} \mu_{ij} \psi_{ij}(t)dt, ~~\forall (ij),
\end{align*}
and therefore
\begin{multline*}
\sum_{(ij)} \nu_i (d^r_{ij}(T_2)-d^r_{ij}(T_1)) \to 
\sum_{(ij)} \nu_i \int_{T_1}^{T_2} \mu_{ij} \psi_{ij}(t)dt =\\
\sum_{(ij)} \int_{T_1}^{T_2} \frac{\alpha_j}{\beta_j} \psi_{ij}(t)dt =
\sum_j \frac{\alpha_j}{\beta_j} \int_{T_1}^{T_2} [\sum_i \psi_{ij}(t)]dt = T_2-T_1.
\end{multline*}
%$$
% \bigl\lvert \sum_{(ij)} \nu_i (d^r_{ij}(T_2)-d^r_{ij}(T_1))   -\sum_j \alpha_j (T_2-T_1) \bigr\rvert \to 0.
%$$
\end{proof}

\begin{proof}[Proof of Theorem~\ref{thm-fluid-stability}] 
Recall that $\nu_i>0$ is the workload associated with a single request of type $i$; i.e., the optimal dual variable associated with \eqref{lp-constraint1} for type $i$. We consider the quantity
\[
W^r(t) = \sum_i \nu_i x^r_i(t)
\]
(where $x^r_i(t) = \sum_i q^r_i(t) + \sum_{ij} \psi^r_{ij}(t)$), the total workload of the system. We will argue that the quantity
\[
\CL^r(t) = [W^r(t)]^2
\]
will serve as a Lyapunov function for the $r$th system. Namely, the following property holds:
there exist positive constants $K$, $T$, $C_1$, $C_2$, $C_3$ such that, for all sufficiently large $r$, 
\beql{eq-lyap1}
\text{if }\CL^r(t) > K \text{ then } \BE[\CL^r(t+T) - \CL^r(t) \vert \CL^r(t)] < -C_1 W^r(t) +C_2
\end{equation}
and 
\beql{eq-lyap2}
\text{if }\CL^r(t) \le K \text{ then } \BE[\CL^r(t+T) - \CL^r(t) \vert \CL^r(t)] < C_3.
\end{equation}
(The proof of \eqn{eq-lyap1}-\eqn{eq-lyap2} is given after we complete the theorem proof.)
It is then a standard application of the Foster-Lyapunov criteria to conclude that for all sufficiently large $r$ the system
Markov process is positive recurrent,
and moreover, the stationary distributions are such that $\BE W^r = \sum_i \nu_i \BE x^r_i$ remains uniformly (in $r$) bounded. 
Indeed, for any fixed initial state 
of the process, consider the embedded chain at times $0, T, 2T, \ldots$.
It easily follows (using the fact that each input flow is Poisson,
and fluid scaling is applied) that $0\le \BE\CL^r(nT)=\BE[W^r(nT)]^2 < \infty$ for all 
$n=0,1,2, \ldots$. Also, WLOG, by rechoosing if necessary $C_1$ and $C_2$,
we can assume that the ``then'' part of \eqn{eq-lyap1} holds for any $\CL^r(t)$.
We see that, for any $n$,
$$
\BE\CL^r((n+1)T) -  \BE\CL^r(nT) \le -C_1 \BE W^r(nT) +C_2;
$$
from here the positive recurrence and steady-state bound $\BE W^r\le C_2/C_1$ 
easily follow, because the opposite would imply $\BE\CL^r(nT)\to -\infty$
as $n\to\infty$.

Uniform bound on $\BE W^r$ implies tightness of invariant distributions.
%(See e.g. \cite[Theorem 1]{Tweedie}.)
The tightness together with Proposition~\ref{th-fluid-drains} imply that the sequence of invariant distributions
must weakly converge to the point mass at equilibrium.

It remains to show property \eqn{eq-lyap1}-\eqn{eq-lyap2}.
First, it is easy to see (and is a standard observation) that 
\beql{eq-uniform222}
\forall T>0,
~~ \BE[W^r(t+T)-W^r(t)]^2 ~\mbox{are uniformly bounded across all $r$ and $t$}. 
\end{equation}
This guarantees \eqn{eq-lyap2} for any fixed $K$. To prove \eqn{eq-lyap1}, we fix $T_1>0$ as in Lemma~\ref{lem-for-lyapunov}, and then choose a large fixed $T > T_1$. Note that
\[
(\min_{i \in \CI} \nu_i)(\max_{i \in \CI} q^r_i(t)) \leq W^r(t) \leq (\max_{i \in \CI} \nu_i)(I \max_{i \in \CI}q^r_i(t) + \sum_j \beta_j);
\]
in particular,  the condition $\max_{i \in \CI} q^r_i(0) \to\infty$ in Lemma~\ref{lem-for-lyapunov} is equivalent to $W^r(0)\to\infty$.
If we fix a sufficiently small $\epsilon'>0$ and apply Lemma~\ref{lem-for-lyapunov}, we obtain the following fact:\\ for a sufficiently large fixed $K>0$ (as a function of $T$),
uniformly on all $\CL^r(0)>K$ and all large $r$, 
\beql{eq-w-drift}
\BP \{W^r(T) - W^r(0) \leq 2 \rho T_1 -\frac12(1-\rho) (T - T_1)\} \ge 1-\epsilon'.
\end{equation}
Indeed, the $2 \rho  T_1$ is a crude upper bound on $W^r(T_1) - W^r(0)$, which holds with high probability (w.h.p.) for large $r$, since by \eqn{eqn:workload} new workload arrives at average rate $\rho$ (in the fluid-scaled system).
The term $-(1/2)(1-\rho) (T - T_1)$ is an upper bound on $W^r(T) - W^r(T_1)$, 
also holding w.h.p., because
by Lemma~\ref{lem-for-lyapunov} the average rate at which workload leaves the system
is w.h.p. close to $1$ in $[T_1,T]$; and recall that $\rho<1$. This proves
\eqn{eq-w-drift}. The RHS of the first inequality in \eqn{eq-w-drift} is negative 
if we choose $T$ large enough. This, along with \eqn{eq-uniform222}, 
implies \eqn{eq-lyap1}, since we have the identity $\CL^r(t+T)-\CL^r(t) = 2W^r(t) (W^r(t+T)-W^r(t)) + [W^r(t+T)-W^r(t)]^2$.
\end{proof}

\section{Proof of Theorem~\ref{th-stationary-scale-main}(ii)}
\label{sec-main-proof}

\subsection{Preliminaries}

In the previous section we have shown that the process 
$(\Psi^r_{ij}(\cdot), Q^r_i(\cdot))$ is positive recurrent, and then
has unique stationary (or invariant) distribution for all large $r$
(which proved Theorem~\ref{th-stationary-scale-main}(i)).
Moreover, 
\beql{eq-yyy}
r^{-1}(\Psi^r_{ij}-\psi_{ij}^* r, Q^r_i) \implies 0.
\end{equation}
Here and in the rest of the paper
$(\Psi^r_{ij}, Q^r_i)$ means ``$(\Psi^r_{ij}(t), Q^r_i(t))$ in stationary regime.''

So, we know that Theorem~\ref{th-stationary-scale-main}(ii)
is true for $\epsilon=1/2$, and our goal is to prove it for any $\epsilon>0$.
In what follows $0< \epsilon< 1/2$ is fixed.

From \eqn{eq-yyy}, 
for an arbitrarily small fixed $\delta>0$, we can choose a positive function $g(r)=o(r)$, such that,
\beql{eq-sub-r}
\BP\{ \abs{(\Psi^r_{ij} - r \psi^*_{ij}, Q^r_i)} \le g(r)\} \ge 1-\delta.
\end{equation}
Without loss of generality, assume $r^{-1/2-\epsilon}g(r)\to\infty$.

We will prove that there exist positive constants $C$ and $T$, such that for any
fixed $\delta_1>0$ the following holds for all sufficiently large $r$: 
\begin{multline}
\label{eq-goal}
%r^{1/2+\epsilon} \le 
\abs{\Bigl(\Psi^r_{ij}(0) - r \psi^*_{ij}, Q^r_i(0)\Bigr)} \le g(r) ~~
\mbox{implies} \\
\BP\{  \abs{\Bigl(\Psi^r_{ij}(T\log r) - r \psi^*_{ij}, Q^r_i(T\log r)\Bigr)} 
\le C r^{1/2+\epsilon}\}
\ge 1-\delta_1.
\end{multline}
This fact, along with \eqn{eq-sub-r}, 
implies that for all large $r$, in steady-state,
$$
\BP\{  \abs{\Bigl(\Psi^r_{ij} - r \psi^*_{ij}, Q^r_i\Bigr)} 
\le C r^{1/2+\epsilon}\}
\ge (1-\delta)(1-\delta_1).
$$
This clearly proves Theorem~\ref{th-stationary-scale-main}(ii),
because $\delta, \delta_1$ can be chosen, and $\epsilon$ rechosen, to be arbitrarily small.
So, the rest of Section~\ref{sec-main-proof} is the proof of \eqn{eq-goal}, with the final part of the proof given in Section~\ref{subsec-main-proof}.
%**{\bf This fact, along with \eqn{eq-sub-r}, will be the key to proving Theorem~\ref{th-stationary-scale-main}(ii). The proof 
%itself is in Section~\ref{subsec-main-proof}.}**

We will need FSLLN-type results, which can be obtained 
from a strong approximation of Poisson processes, 
available e.g. in \cite[Chapters 1 and 2]{Csorgo_Horvath}: 
\begin{prop}\label{thm:strong approximation-111-clean}
A unit rate Poisson process $\Pi(\cdot)$ and 
a standard Brownian motion $W(\cdot)$ can be constructed on a common
probability space in such a way that the following holds.
For some fixed positive constants $C_1$, $C_2$, $C_3$,   such that 
$\forall T>1$ and
$\forall u \geq 0$
\[
\BP\left(\sup_{0 \leq t \leq T} \abs{\Pi(t) - t - W(t)} \geq C_1 \log T + u\right) \leq C_2 e^{-C_3 u}.
\]
\end{prop}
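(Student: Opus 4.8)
The plan is to reduce the statement to the classical Hungarian (Koml\'os--Major--Tusn\'ady) strong approximation for partial sums of i.i.d.\ random variables, and then to pass from the integer-time approximation to the continuous-time supremum form by controlling the oscillations of both processes inside unit intervals. The key point to keep in mind throughout is that the coupling must be constructed \emph{once and for all}, independently of the horizon $T$, so that a single pair $(\Pi,W)$ satisfies the tail bound simultaneously for every $T>1$.

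First I would look at the embedded partial sums $S_n = \Pi(n) - n = \sum_{k=1}^n Y_k$, where $Y_k = \Pi(k) - \Pi(k-1) - 1$ are i.i.d., with $\BE Y_k = 0$, $\mathrm{Var}(Y_k) = 1$, and $\BE e^{\theta Y_k} < \infty$ for $\theta$ in a neighborhood of the origin (since $\Pi(k)-\Pi(k-1)$ is Poisson of mean $1$). For such summands the strong approximation construction of \cite[Chapters 1 and 2]{Csorgo_Horvath} produces a standard Brownian motion $W(\cdot)$ on a common probability space with the property that, for suitable positive constants,
\[
\BP\Bigl(\max_{1\le k\le n}\abs{S_k - W(k)}\ge C_1'\log n + u\Bigr)\le C_2'\, e^{-C_3' u},
\]
for all $n\in\BN$ and all $u\ge 0$; the construction does not depend on $n$.

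Second I would interpolate to continuous time. On each interval $[k-1,k]$ I would bound $\sup_{k-1\le t\le k}\abs{(\Pi(t)-t)-S_{k-1}}$ by $1 + (\Pi(k)-\Pi(k-1))$, an increment that is Poisson of mean $1$ and hence has an exponential tail, and bound $\sup_{k-1\le t\le k}\abs{W(t)-W(k-1)}$ via the Gaussian tail of the Brownian oscillation over a unit interval. Combining these with the integer-time bound through the triangle inequality gives, on $[0,T]$ with $n=\ceiling{T}$,
\[
\sup_{0\le t\le T}\abs{\Pi(t)-t-W(t)} \le \max_{1\le k\le n}\abs{S_k-W(k)} + \max_{1\le k\le n}\bigl(\text{within-interval oscillations of $\Pi$ and $W$}\bigr).
\]
Taking a union bound over the $n=\ceiling{T}$ intervals for the oscillation terms, and using that each oscillation has an exponential (respectively Gaussian) tail, shows that the maximum of the oscillations exceeds $c\log T + u$ with probability at most $C\, e^{-C' u}$; this is where the $C_1\log T$ term in the statement originates, after merging with the $\log n = \log\ceiling{T}$ term coming from the discrete-time bound. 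Collecting the pieces and relabeling constants then yields the claimed inequality uniformly in $T>1$ and $u\ge 0$.

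The main obstacle is the second and third steps taken together: ensuring that the passage from integer to continuous time, and the accompanying union bound over $O(T)$ intervals, preserve a single exponential tail in $u$ with constants uniform over all $T>1$ simultaneously, rather than degrading as $T$ grows. This is handled precisely by exploiting that the strong-approximation coupling is fixed independently of $T$, and that both families of within-interval oscillations have tails decaying fast enough that their maxima over $O(T)$ intervals cost only an additive $\log T$, which is absorbed into the existing $C_1\log T$ term.
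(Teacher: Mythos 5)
Your proposal is correct, but there is nothing in the paper to compare it against line by line: the paper does not prove Proposition~\ref{thm:strong approximation-111-clean} at all, it imports it from the literature, saying only that it ``can be obtained from a strong approximation of Poisson processes, available e.g.\ in \cite[Chapters 1 and 2]{Csorgo_Horvath}.'' What you have written is essentially a reconstruction of the standard argument behind that citation: apply the Koml\'os--Major--Tusn\'ady coupling to the i.i.d.\ centered increments $Y_k=\Pi(k)-\Pi(k-1)-1$ (which have a finite moment generating function near the origin), then pass to continuous time by controlling the within-interval oscillations. Your treatment of the two oscillation terms is sound: the Poisson oscillation over $[k-1,k]$ is dominated by $1+(\Pi(k)-\Pi(k-1))$, whose exponential tail survives a union bound over $\lceil T\rceil$ intervals at the cost of an additive multiple of $\log T$ and a factor absorbed into $C_2$; the Brownian oscillation has a Gaussian tail, which is even better, and the uniformity in $T>1$ works because $T e^{-c(\log T)^2}$ and $T\cdot T^{-c}$ (for $c\geq 1$) stay bounded. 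Two points you should make explicit in a full write-up, though neither is a gap: (i) KMT couples the \emph{sequence} $(Y_k)$ with a Brownian motion, so to obtain a genuine continuous-time unit-rate Poisson process $\Pi(\cdot)$ on the same probability space you must additionally place the jump points inside each unit interval according to their conditional (uniform order-statistics) law, independently across intervals and of $W$ --- this enrichment is what makes the pair $(\Pi,W)$ well defined; (ii) when you merge $C_1'\log\lceil T\rceil$ with $C_1\log T$ and split the allowance $C_1\log T+u$ between the discrete-time and oscillation terms, the leftover additive constants (such as $C_1'\log 2$) must be absorbed into the multiplicative constant $C_2$, which is legitimate precisely because the exponential rate $C_3$ is fixed. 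In exchange for this extra work, your route makes the proposition self-contained, whereas the paper's route simply delegates it to the reference; both rest on the same underlying KMT construction.
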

From here, for the unit rate Poisson processes $\Pi^{(a)}_i(\cdot)$ and 
$\Pi^{(s)}_{ij}(\cdot)$, driving exogenous arrivals and departures,
we obtain the following fact. (For $\Pi^{(a)}_i(\cdot)$, for example,
we replace $t$ with $\lambda_i rt$; $T$ with $\lambda_i rT \log r$;
and $u$ with $r^{1/4}$.)
\begin{prop}\label{thm:strong approximation-111}
For any fixed $T>0$ and 
any subsequence of $r\to\infty$, we can find a further 
subsequence (with $r$ increasing sufficiently fast),
such that:\\
 for each $i$
\[
\sup_{0 \leq t \leq T\log r} r^{-1/2-\epsilon/2} \abs{\Pi^{(a)}_i(\lambda_i r t) - \lambda_i rt} 
\to 0, ~~~\mbox{w.p.1},
\]
and for each $(ij)$
\[
\sup_{0 \leq t \leq T\log r} r^{-1/2-\epsilon/2} \abs{\Pi^{(s)}_{ij}(\mu_{ij}\beta_j r t) - \mu_{ij} \beta_j rt} 
\to 0, ~~~\mbox{w.p.1}.
\]
\end{prop}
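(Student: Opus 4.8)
The plan is to deduce Proposition~\ref{thm:strong approximation-111} directly from the strong approximation of Proposition~\ref{thm:strong approximation-111-clean}, applied separately to each of the finitely many driving processes $\Pi^{(a)}_i(\cdot)$, $i\in\CI$, and $\Pi^{(s)}_{ij}(\cdot)$, $(ij)\in\CE$. Fix the arrival process $\Pi^{(a)}_i$ and let $W_i(\cdot)$ be the standard Brownian motion coupled to it as in Proposition~\ref{thm:strong approximation-111-clean}. Substituting $s=\lambda_i r t$, the quantity to control is
\[
\sup_{0\le t\le T\log r}\abs{\Pi^{(a)}_i(\lambda_i rt)-\lambda_i rt}=\sup_{0\le s\le M_r}\abs{\Pi^{(a)}_i(s)-s},\qquad M_r:=\lambda_i r T\log r,
\]
and I would split it via the triangle inequality as
\[
\abs{\Pi^{(a)}_i(s)-s}\le \abs{\Pi^{(a)}_i(s)-s-W_i(s)}+\abs{W_i(s)}.
\]

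For the first (strong-approximation) term I would apply Proposition~\ref{thm:strong approximation-111-clean} with horizon $M_r$ in place of $T$ and $u=r^{1/4}$, as suggested in the text preceding the statement. Since $\log M_r=O(\log r)$, the threshold $C_1\log M_r+r^{1/4}$ is $O(r^{1/4})$, and the exceedance probability is at most $C_2 e^{-C_3 r^{1/4}}\to 0$. For the second (Brownian) term I would use a maximal/reflection inequality, $\BP(\sup_{0\le s\le M_r}\abs{W_i(s)}\ge a)\le 4\,\BP(W_i(M_r)\ge a)\le \text{const}\cdot e^{-a^2/(2M_r)}$, with threshold $a=r^{1/2+\epsilon/4}$. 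Because $M_r=\Theta(r\log r)$, the exponent is $-a^2/(2M_r)=-\Theta(r^{\epsilon/2}/\log r)\to-\infty$, so this probability also tends to $0$. Both thresholds are $O(r^{1/2+\epsilon/4})=o(r^{1/2+\epsilon/2})$; hence, off an event of probability $\to 0$, the scaled supremum $r^{-1/2-\epsilon/2}\sup_{0\le t\le T\log r}\abs{\Pi^{(a)}_i(\lambda_i rt)-\lambda_i rt}$ is $O(r^{-\epsilon/4})$. The service processes are handled identically, with $s=\mu_{ij}\beta_j rt$ and $M_r=\mu_{ij}\beta_j r T\log r$; note that here the time argument $\mu_{ij}\beta_j rt$ is deterministic, so no further care is needed.

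Having shown that each of the finitely many scaled suprema converges to $0$ in probability, I would upgrade this to the almost-sure convergence along a subsequence that the statement permits: passing to a sufficiently fast subsequence $r_k$ makes the exceedance probabilities summable, so Borel--Cantelli yields that w.p.1 only finitely many of the bad events occur, and a finite intersection over $i$ and $(ij)$ preserves the full-measure conclusion. The one genuinely load-bearing point --- and the reason the target scale is $r^{1/2+\epsilon/2}$ rather than the diffusion scale $r^{1/2}$ --- is the comparison in the Brownian term: the natural size of $\sup_{0\le s\le M_r}\abs{W_i(s)}$ is of order $\sqrt{M_r}=\Theta(\sqrt{r\log r})$, and the slack between $\sqrt{\log r}$ and $r^{\epsilon/2}$ is exactly what lets the scaled fluctuation vanish. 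I would make sure the chosen intermediate threshold (here $r^{1/2+\epsilon/4}$) sits strictly between $\sqrt{r\log r}$ and $r^{1/2+\epsilon/2}$, which is where any $\epsilon$-bookkeeping must be checked; everything else is routine.
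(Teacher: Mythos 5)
Your proposal is correct and follows essentially the same route as the paper: the paper's (sketched) proof likewise applies Proposition~\ref{thm:strong approximation-111-clean} with horizon $\lambda_i rT\log r$ (resp. $\mu_{ij}\beta_j rT\log r$) and $u=r^{1/4}$, absorbs the Brownian fluctuation of order $\sqrt{r\log r}$ into the $r^{1/2+\epsilon/2}$ scale, and passes to a sufficiently fast subsequence for the almost-sure statement. Your explicit Gaussian maximal-inequality bookkeeping with the intermediate threshold $r^{1/2+\epsilon/4}$ and the Borel--Cantelli step simply fill in details the paper leaves implicit.
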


Let $F^r(t)$ be the process of (unscaled) deviations from equilibrium; that is,
\[
F^r(t) = (\Psi^r_{ij}(t) - r \psi^*_{ij}, Q^r_i(t)).
\]
Suppose we have a function $h(r)$, such that $r^{1/2+\epsilon} \le h(r) \le g(r)$.
(The quantity $h(r)$ will be the ``scale'' of  $\abs{F^r(0)}$; sometimes, we simply use
$h(r)=\abs{F^r(0)}$, but not necessarily.)
We will establish properties of $F^r(\cdot)$ under two different 
scalings, called hydrodynamic and local-fluid.

We remark that the use of multiple scalings (in addition to the ``standard" fluid scaling)
 is typical in the analysis of systems in many-server asymptotic regime,
cf. \cite{Gurvich_Whitt} and references therein.  However, our hydrodynamic and local-fluid scalings are somewhat unusual
in that the scaling factor $h(r)$ is strictly ``between" $r$ and $r^{1/2}$. (When $h(r)=r$,
both local-fluid and hydrodynamic scalings become the standard fluid scaling; if 
$h(r)=r^{1/2}$,
the local-fluid scaling becomes the standard diffusion scaling.)
Also, the system behavior, of course, depends on the control discipline, LAP in our case; and so our analysis of LAP under various scalings is new.
Most importantly, the way we use these multiple scalings for the purposes of proving tightness of {\em stationary} distributions
is novel, to the best of our knowledge.

\subsection{Hydrodynamic scaling}
\label{subsec-hydro}

Consider the process under the following scaling and centering:
\begin{multline}
(\ov \psi^r_{ij}(t),\ov q^r_i(t), \ov x^r_i(t), \ov a^r_i(t), \ov d^r_{ij}(t),\ov \xi^r_{ij}(t)) = \\
h(r)^{-1} \Bigl(\Psi^r_{ij}((h(r) r^{-1}t)-r \psi^*_{ij},Q^r_i(h(r) r^{-1}t), X^r_i(h(r) r^{-1}t) - r \sum_j \psi^*_{ij}, \\
A^r_i(h(r) r^{-1}t), D^r_{ij}(h(r) r^{-1}t),\Xi^r_{ij}(h(r) r^{-1}t) \Bigr).
\end{multline}

\begin{theorem}\label{th-hydro-convergence}
Consider a sequence of deterministic realizations, such that the driving realizations
satisfy FSLLN conditions, namely:
\beql{eq-lln-hydro1}
(\ov a^r_i(t), ~t\ge 0) \to (\lambda_i t, ~t\ge 0), ~~\mbox{u.o.c.}, ~\forall i
\end{equation}
\beql{eq-lln-hydro2}
\Bigl(h(r)^{-1} \bigl( D^r_{ij}(h(r)r^{-1} t) - \mu_{ij} \int_0^{h(r)r^{-1} t} \Psi^r_{ij}(s) ds \bigr), ~t\ge 0\Bigr) \to 0,
~~\mbox{u.o.c.}, ~\forall (ij).
\end{equation}
Suppose $(\ov \psi^r_{ij}(0),\ov q^r_i(0)) \to (\ov \psi_{ij}(0), \ov q_i(0))$.\\
Then, 
for any subsequence of $r$ there exists a further subsequence along which 
$(\ov \psi^r_{ij}(\cdot),\ov q^r_i(\cdot), \ov x^r_i(\cdot), \ov a^r_i(\cdot), \ov d^r_{ij}(\cdot),\ov \xi^r_{ij}(\cdot))$ converges uniformly on compact sets to a set $(\ov \psi_{ij}(\cdot), \ov q_i(\cdot), \ov x_i(\cdot), \ov a_i(\cdot), \ov d_{ij}(\cdot),\ov \xi_{ij}(\cdot))$ of Lipschitz continuous functions satisfying conditions \eqref{eqn:hydro-model}. 
(The conditions involving derivatives are to be satisfied 
at regular time points $t\ge 0$ of the limiting set of functions.)
\end{theorem}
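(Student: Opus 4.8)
The plan is to follow the template of the proof of Proposition~\ref{th-local-fluid-convergence}, since the hydrodynamic limit is again produced as a u.o.c. limit of asymptotically Lipschitz trajectories, with the limit identified by passing the pre-limit conservation laws and LAP priority relations to the limit. The only genuinely new feature is the interplay between the centering at $r\psi^*_{ij}$ and the scaling by $h(r)^{-1}$, which causes the departure processes to \emph{linearize}; this is the point I would treat carefully, relegating the rest to the (routine) adaptation of the fluid-scale argument.

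First I would establish relative compactness. The FSLLN hypotheses \eqref{eq-lln-hydro1}--\eqref{eq-lln-hydro2}, together with the conservation identities relating $\ov\xi^r_{ij}$, $\ov q^r_i$, $\ov\psi^r_{ij}$, $\ov x^r_i$ to $\ov a^r_i$ and $\ov d^r_{ij}$, force each scaled component to be asymptotically Lipschitz on compacts with a common constant. Arzel\`a--Ascoli then yields, from any subsequence, a further subsequence converging u.o.c. to a Lipschitz limit. This step is identical to the corresponding one in Proposition~\ref{th-local-fluid-convergence}.

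Next, and this is the key computation, I would identify the departure limits. Writing $\Psi^r_{ij}(h(r)r^{-1}u) = r\psi^*_{ij} + h(r)\ov\psi^r_{ij}(u)$ and changing variables $u = sr/h(r)$ inside the integral in \eqref{eq-lln-hydro2} gives
\[
\ov d^r_{ij}(t) = \mu_{ij}\psi^*_{ij}\,t + \mu_{ij}\,\frac{h(r)}{r}\int_0^t \ov\psi^r_{ij}(u)\,du + o(1).
\]
Since $h(r)/r\to 0$ and $\ov\psi^r_{ij}$ is uniformly bounded on compacts, the middle term vanishes and $\ov d^r_{ij}(t)\to \mu_{ij}\psi^*_{ij}\,t=\lambda_{ij}\,t$. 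Thus on the hydrodynamic scale the deviations $h(r)\ov\psi$ are too small to affect service output: the limiting departure rate is frozen at its equilibrium value $\lambda_{ij}$, and combined with $\ov a^r_i(t)\to\lambda_i t$ this yields the limiting conservation laws of \eqref{eqn:hydro-model}. The LAP priority relations (the analogues of \eqref{eqn-key1}--\eqref{eqn:last}) are then read off from the behaviour of the pre-limit trajectories over small windows $[t,t+\delta]$ for large $r$, exactly as in Proposition~\ref{th-local-fluid-convergence} and \cite[Theorem 1]{Mandelbaum_Stolyar}. The one point requiring attention is that, after centering, the capacity constraint $\sum_i\Psi^r_{ij}\le r\beta_j$ becomes $\sum_i\ov\psi^r_{ij}\le 0$ for $j<J$ (using $\sum_i\psi^*_{ij}=\beta_j$ from Assumption~\ref{ass:rho}), with equality whenever some $\ov q_{i'}$, $i'\in\CC(j)$, is positive; the activity priority ordering itself is unchanged by the scaling, so the idleness/priority implications and rate identities transfer once the derivatives of the (now linear) limiting departure terms are substituted.

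I expect the main obstacle to be the bookkeeping around this centered, non-standard scaling: confirming that the $O(h(r)/r)$ correction to the departure integral is negligible uniformly on compacts, and that the binding-constraint and priority conditions survive the translation by $r\psi^*_{ij}$ with the correct signs (e.g. $\sum_i\ov\psi_{ij}\le 0$ in place of $\sum_i\psi_{ij}\le\beta_j$). Once the departure linearization is established, the remainder is a direct adaptation of the fluid-model proof, and the finer details may be omitted as in Proposition~\ref{th-local-fluid-convergence}.
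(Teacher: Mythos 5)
Your proposal is correct and takes essentially the same approach as the paper: the paper omits this proof entirely, stating only that it is analogous to that of Proposition~\ref{th-local-fluid-convergence}, which is precisely the template you adapt. Your handling of the one genuinely new ingredient --- the change of variables giving $\ov d^r_{ij}(t) = \mu_{ij}\psi^*_{ij}\,t + \mu_{ij}\frac{h(r)}{r}\int_0^t \ov\psi^r_{ij}(u)\,du + o(1)$, with the correction term vanishing because $h(r)=o(r)$ and the asymptotic Lipschitz property (established first, from the FSLLN hypotheses) bounds $\ov\psi^r_{ij}$ on compacts --- is exactly the right supplement and is sound.
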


The hydrodynamic model conditions are: 
\begin{subequations}\label{eqn:hydro-model}
\begin{equation}\label{eqn-first-hydro}
\ov q_i(t) \geq 0, ~~ \forall i \in \CI;  \qquad \sum_i \ov \psi_{ij}(t) \leq 0, ~~ \forall j \in \CJ
\end{equation}
\begin{equation}
\ov a_i(t) = \lambda_i t, ~~ \forall i \in \CI; \qquad \ov d_{ij}(t) = \mu_{ij}\psi^*_{ij} t, ~~ \forall (ij) \in \CE
\end{equation}
\begin{align}\label{eqn-q-hydro}
\ov q_i(t) = \ov q_i(0) +  \ov a_i(t) - \sum_j \ov \xi_{ij}(t),~~ \forall i \in \CI;\\
\ov \psi_{ij}(t) =\ov \psi_{ij}(0) +  \ov \xi_{ij}(t) - \ov d_{ij}(t),~~ \forall i \in \CI \notag
\end{align}
\begin{equation}\label{eqn-x-hydro}
\ov x_i(t) = \ov q_i(t) + \sum_j \ov \psi_{ij}(t) \equiv  \ov x_i(0),~~ \forall i \in \CI
\end{equation}
\begin{equation}\label{eqn-key1-hydro}
\sum_i  \ov \psi_{ij}(t) = 0, ~~ \text{whenever $\ov q_{i'}(t)>0$ for at least one $i' \in \CC(j)$}
\end{equation}
\begin{equation}\label{eqn-key2-hydro}
\frac{d}{dt}\ov \xi_{ij}(t) = 0 , ~~ \text{whenever $\ov q_{i'}(t)>0$ for at least one $i' \in \CC(j)$, $i'<i$}
\end{equation}
\begin{equation}\label{eqn-key3-hydro}
\frac{d}{dt}\ov \xi_{ij}(t) = 0 , ~~ \text{whenever $\sum_{k} \ov \psi_{kj'}(t) < 0$ for at least one $(ij') < (ij)$}
\end{equation}
\begin{multline}\label{eqn:last-hydro-2}
\frac{d}{dt} \ov \xi_{ij}(t) = 
\sum_{i'} \mu_{i'j} \psi^*_{i'j} -  \sum_{(i'j) < (ij)} \frac{d}{dt} \ov \xi_{ij'}(t),\\
\text{whenever $\ov q_i(t)>0$ (and then necessarily $\sum_{k} \ov \psi_{kj}(t) = 0$)}
\end{multline}
\begin{multline}\label{eqn:last-hydro-1}
\frac{d}{dt} \ov \xi_{ij}(t) = \lambda_{i} - \sum_{(ij') < (ij)} \frac{d}{dt} \ov \xi_{ij'}(t), 
\\
\text{whenever $\sum_{k} \ov \psi_{kj}(t) < 0$ (and then necessarily $\ov q_i(t)=0$)}
\end{multline}
\begin{multline}\label{eqn:last-hydro}
\frac{d}{dt} \ov \xi_{ij}(t) = \min \left(\lambda_{i} - \sum_{(ij') < (ij)} \frac{d}{dt} \ov \xi_{ij'}(t), 
\sum_{i'} \mu_{i'j} \psi^*_{i'j} -  \sum_{(i'j) < (ij)} \frac{d}{dt} \ov \xi_{ij'}(t) \right)\\
\text{whenever $\ov q_i(t)=0$ and $\sum_{k} \ov \psi_{kj}(t) = 0$.}
\end{multline}
\end{subequations}

There is a clear correspondence between the hydrodynamic model and fluid model
conditions. This is not surprising, of course, -- the hydrodynamic limit is also an
FSLLN-type limit, but on a different, finer time and space scale.
We omit the proof of Theorem~\ref{th-hydro-convergence} -- it is analogous to that
of Proposition~\ref{th-local-fluid-convergence}.

We call any Lipschitz solution $(\ov \psi_{ij}(\cdot), \ov q_i(\cdot), \ov x_i(\cdot), \ov a_i(\cdot), \ov d_{ij}(\cdot), \ov \xi_{ij}(\cdot))$
of \eqref{eqn:hydro-model}  a \emph{hydrodynamic model} (HM) of the system with initial state $(\ov \psi_{ij}(0), \ov q_i(0))$; a set $(\ov \psi_{ij}(\cdot), \ov q_i(\cdot))$,
which is a projection of an HM we often call a hydrodynamic model as well.
Also, we sometimes use shorter notations 
$\ov f^r(\cdot)=(\ov \psi_{ij}^r(\cdot), \ov q_i^r(\cdot))$,
$\ov f(\cdot)=(\ov \psi_{ij}(\cdot), \ov q_i(\cdot))$.

%Clearly,
We have the following corollary of Theorem~\ref{th-hydro-convergence} 
%and \ref{hydro-behavior}
which we record for future reference. 
%We use notations $\ov f^r(\cdot)=(\ov \psi_{ij}^r(\cdot), \ov q_i^r(\cdot))$,
%$\ov f(\cdot)=(\ov \psi_{ij}(\cdot), \ov q_i(\cdot))$.

\begin{coroll}
\label{close-hydro}
For any fixed $T>0$, $K>0$ and $\delta_2>0$, there exists a sufficiently small $\delta_3>0$,
such that the following holds. Uniformly on all $|\ov f^r(0)| \le K$ and all sufficiently
large $r$, conditions 
\beql{eq-lln-hydro111}
\max_i \sup_{[0,T]} |\ov a^r_i(t)-\lambda_i t| \le \delta_3,
\end{equation}
\beql{eq-lln-hydro222}
\max_{(ij)} \sup_{[0,T]} 
|h(r)^{-1} \bigl( D^r_{ij}(h(r)r^{-1} t) - \mu_{ij} \int_0^{h(r)r^{-1} t} \Psi^r_{ij}(s) ds \bigr)| \le \delta_3,
\end{equation}
imply 
\beql{eq-lln-hydro333}
\sup_{[0,T]} |\ov f^r(t)-\ov f(t)| \le \delta_2,
\end{equation}
where $\ov f(\cdot)$ is a hydrodynamic model.
\end{coroll}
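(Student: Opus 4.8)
\emph{Approach.} The natural approach is to argue by contradiction, converting the desired uniform statement into the subsequential convergence already established in Theorem~\ref{th-hydro-convergence}. Note first that the corollary is a purely deterministic statement about trajectories: the hypotheses \eqn{eq-lln-hydro111}--\eqn{eq-lln-hydro222} are conditions on a fixed driving realization, with no probability involved, so no ``w.p.1'' qualifier is needed. Suppose then that the claim fails for some fixed $T,K,\delta_2>0$. Negating the assertion (``there exist $\delta_3>0$ and $r_0$ such that for all $r\ge r_0$ and all $|\ov f^r(0)|\le K$, the hypotheses imply the conclusion''), we can, for each $n$, take $\delta_3=1/n$ and $r_0=n$ to produce an index $r_n\ge n$ and a deterministic realization with initial state $|\ov f^{r_n}(0)|\le K$ whose driving data satisfy \eqn{eq-lln-hydro111}--\eqn{eq-lln-hydro222} with $\delta_3=1/n$, yet for which no hydrodynamic model satisfies \eqn{eq-lln-hydro333}; that is, $\sup_{[0,T]}|\ov f^{r_n}(t)-\ov f(t)|>\delta_2$ for \emph{every} hydrodynamic model $\ov f(\cdot)$.

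\emph{Extracting a limit.} Along this sequence the initial states lie in the ball $|\ov f^{r_n}(0)|\le K$, so by compactness of this ball in Euclidean space we may pass to a subsequence along which $\ov f^{r_n}(0)\to \ov f(0)$ for some limit with $|\ov f(0)|\le K$. Since $\delta_3=1/n\to 0$, the hypotheses \eqn{eq-lln-hydro111}--\eqn{eq-lln-hydro222} force the driving data to converge, uniformly on $[0,T]$, to exactly the FSLLN limits \eqn{eq-lln-hydro1}--\eqn{eq-lln-hydro2}. Hence the assumptions of Theorem~\ref{th-hydro-convergence} are met on the horizon $[0,T]$, and we may extract a further subsequence along which $\ov f^{r_n}(\cdot)$ converges u.o.c.\ to a hydrodynamic model $\ov f(\cdot)$ with initial state $\ov f(0)$.

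\emph{Contradiction and the main point of care.} This limiting $\ov f(\cdot)$ is itself a hydrodynamic model, and along the chosen subsequence $\sup_{[0,T]}|\ov f^{r_n}(t)-\ov f(t)|\to 0$, which is eventually $\le\delta_2$, contradicting the defining property $\sup_{[0,T]}|\ov f^{r_n}(t)-\ov f(t)|>\delta_2$ for every hydrodynamic model. The corollary follows. The only point requiring care is the handling of the quantifier ``for every hydrodynamic model'' in the negated conclusion: because the u.o.c.\ limit of a convergent subsequence of prelimit trajectories is, by Theorem~\ref{th-hydro-convergence}, a legitimate hydrodynamic model, it is a valid competitor $\ov f$ in the negated statement, and it is precisely the one the prelimit trajectories approach. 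I expect no deeper obstacle, as this is a standard compactness-plus-continuity upgrade of a subsequential limit theorem to a uniform-in-initial-data neighborhood estimate; one should only observe that the finite horizon $[0,T]$ makes the uniform (rather than u.o.c.) convergence of the driving data in \eqn{eq-lln-hydro111}--\eqn{eq-lln-hydro222} sufficient to invoke the convergence theorem.
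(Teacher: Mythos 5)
Your proof is correct and follows essentially the same route as the paper's: negate the statement to produce a sequence $\delta_3\downarrow 0$ with $r\uparrow\infty$, pass to a subsequence along which the initial states converge, and invoke Theorem~\ref{th-hydro-convergence} (specialized to the finite horizon $[0,T]$) to exhibit a hydrodynamic model that the prelimit trajectories approach uniformly, contradicting the negated conclusion. The additional care you take — explicit compactness of the initial-state ball and the observation that the limit HM is itself a valid competitor under the ``for every hydrodynamic model'' quantifier — is implicit in the paper's terser argument, but matches it exactly.
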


\begin{proof} 
Suppose not. Fix $T, K, \delta_2$. There must exist a sequence $\delta_3 \downarrow 0$, 
and a corresponding sequence $r=r(\delta_3)\uparrow \infty$,
such that \eqref{eq-lln-hydro111}, \eqref{eq-lln-hydro222} 
and the convergence $\ov f^r(0)\to \ov f(0)$
of initial states hold, 
but \eqref{eq-lln-hydro333} fails for {\em any} hydrodynamic model.
This, however, is impossible, because according to Theorem~\ref{th-hydro-convergence}
(or rather its version, specialized to a finite time interval, to be precise) we can 
choose a further subsequence of $r$ along which $\ov f^r(t)\to \ov f(t)$,
uniformly in $[0,T]$, where $\ov f(\cdot)$ is a hydrodynamic model
starting from $\ov f(0)$.
\end{proof}

\begin{theorem}\label{hydro-behavior}
For any $K > 0$ there exists a finite time $T = T(K)$ 
and constant $C=C(K)>0$ such that all hydrodynamic models
with $\abs{\ov f(0)} \leq K$ satisfy the following
conditions:
$\sum_i \ov \psi_{ij}(T) = 0, \forall j < J$, $\ov q_i(T) = 0, \forall i \in \CI$,
and $\ov f(t)\equiv \ov f(T)$ for all $t \geq T$;
$\max_{t\ge 0} |\ov f(t)| \le CK$.
Moreover, \linebreak $(\ov \psi_{ij}(T), \ov q_i(T)) = 
L (\ov \psi_{ij}(0), \ov q_i(0))$, where $L$ is a fixed linear mapping.
\end{theorem}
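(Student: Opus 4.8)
The plan is to treat \eqref{eqn:hydro-model} as a piecewise-linear dynamical system, identify an explicit frozen state, and show the model reaches it in time $O(K)$ by the same priority induction used for Proposition~\ref{th-fluid-drains}. The starting observation is that the hydrodynamic model is \emph{conservative} per customer class: since the equilibrium obeys the flow balance $\lambda_i = \sum_j \mu_{ij}\psi^*_{ij}$, equation \eqref{eqn-x-hydro} already records $\ov x_i(t) \equiv \ov x_i(0) = \ov q_i(0) + \sum_j \ov\psi_{ij}(0)$ for every $i$. The target frozen state $\ov f^\ast = (\ov\psi^\ast_{ij}, \ov q^\ast_i)$ is the one with $\ov q^\ast_i = 0$ for all $i$ and $\sum_i \ov\psi^\ast_{ij} = 0$ for all $j<J$, subject to the conserved values, i.e. $\sum_j \ov\psi^\ast_{ij} = \ov x_i(0)$. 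On the activity tree these are $|\CE| = |\CI| + |\CJ| - 1$ linear equations (one per customer node, one per server node $j<J$) in the $|\CE|$ edge-variables $\ov\psi^\ast_{ij}$; since the edge flows on a tree are uniquely determined by the prescribed divergences at the vertices, this system is invertible. Hence $\ov\psi^\ast_{ij}$ is a fixed linear function of $(\ov x_i(0))$, and therefore $\ov f^\ast = L\,\ov f(0)$ for a fixed linear map $L$. This yields the linearity claim, which is the cleanest part of the argument.

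Next I would verify that $\ov f^\ast$ is genuinely stationary for \eqref{eqn:hydro-model}, which gives the assertion $\ov f(t)\equiv \ov f(T)$ for $t\ge T$ once the model reaches it. At $\ov f^\ast$ the candidate derivatives are the balanced rates $\frac{d}{dt}\ov\xi_{ij} = \mu_{ij}\psi^\ast_{ij}$; because $\sum_j \mu_{ij}\psi^\ast_{ij} = \lambda_i$ these hold every $\ov q_i$ at $0$ and every $\ov\psi_{ij}$ constant, and one checks directly that they satisfy \eqref{eqn:last-hydro-1}--\eqref{eqn:last-hydro}. Here Assumption~\ref{ass:rho} is essential: the strict positivity $\psi^\ast_{ij}>0$ guarantees that the minimum in \eqref{eqn:last-hydro} is attained by the balanced value, so no activity is forced off its equilibrium rate.

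The heart of the proof is the finite-time, uniform-in-$K$ convergence $\ov f(\cdot)\to \ov f^\ast$, which I would carry out by induction on activity priority, the key simplification being that centering around equilibrium turns every relevant drift into a \emph{constant} (equilibrium) rate. For the highest-priority activity $(1j)$ one distinguishes the two cases of Proposition~\ref{th-fluid-drains}. If class $1$ is a leaf, it is served only by $j$, so $\ov x_1 = \ov q_1 + \ov\psi_{1j}$; while $\ov q_1>0$, \eqref{eqn:last-hydro-2} gives $\frac{d}{dt}\ov\xi_{1j} = \sum_{i'}\mu_{i'j}\psi^\ast_{i'j}$ and hence $\frac{d}{dt}\ov q_1 = \lambda_1 - \sum_{i'}\mu_{i'j}\psi^\ast_{i'j} = -\sum_{i'\ne 1}\mu_{i'j}\psi^\ast_{i'j}$, a negative constant bounded away from $0$ by Assumption~\ref{ass:rho}. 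Thus $\ov q_1$ hits $0$ by a time at most $\ov q_1(0)/c$ for a constant $c>0$, i.e. in time $O(K)$, at which instant $\ov\psi_{1j} = \ov x_1(0)$ is already at its frozen value and (one checks via \eqref{eqn:last-hydro-1}--\eqref{eqn:last-hydro}) stays there. The case where pool $j$ is a leaf is handled symmetrically, using $\psi^\ast_{1j} = \beta_j$ and $\lambda_1 > \mu_{1j}\beta_j$. For the inductive step, once all higher-priority activities are frozen, \eqref{eqn-key2-hydro}--\eqref{eqn:last-hydro} reduce the next activity to the same one-dimensional constant-drift problem, which freezes in a further $O(K)$ time; summing the finitely many stages gives $T = T(K) = O(K)$.

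Finally, the uniform path bound $\max_{t\ge 0}\abs{\ov f(t)}\le CK$ follows from the same analysis: in each stage the active coordinate moves \emph{monotonically} toward its frozen value (no overshoot), and every frozen value is $O(K)$ by the linearity of $L$, so each coordinate has total variation $O(K)$ over $[0,\infty)$. I expect the main obstacle to be the bookkeeping in the inductive step: one must check that the freezing of a higher-priority activity is not disturbed while lower-priority activities are still adjusting (this is exactly where the priority conditions \eqref{eqn-key2-hydro}--\eqref{eqn-key3-hydro} must be invoked), and that in each case the centered drift has the correct sign and is bounded away from zero --- precisely the role of Assumption~\ref{ass:rho} together with $\rho<1$.
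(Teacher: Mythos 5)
Your proposal is correct and follows essentially the same route as the paper's proof: induction on activity priority with the two leaf cases, centered drifts that are constant and bounded away from zero via Assumption~\ref{ass:rho} (plus tree connectivity), finite freezing time $T(K)=O(K)$, and the linear map $L$ determined by conservation of $\ov x_i$ together with the frozen-state equations, i.e.\ the paper's system \eqref{eqn:starting state-hydro}. One small correction to your stationarity check: the minimum in \eqref{eqn:last-hydro} equals the balanced rate $\mu_{ij}\psi^*_{ij}$ not because of the strict positivity of the $\psi^*_{ij}$ (positivity alone would actually make both terms exceed it), but because each activity $(ij)$ has lowest priority either among the activities of class $i$ or among those of pool $j$, so one of the two residual sums is empty.
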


\begin{proof}
%[Sketch of proof]
Consider a fixed HM $\ov f(\cdot)$. 
Consider the highest priority activity $(1j)$. There are two possible cases:
$j$ is a leaf or $1$ is a leaf.\\
Case a:
If $j$ is a leaf, then $\ov \psi_{1j}(t) \leq 0$ at all times, and $\ov\psi_{1j}(t)$ must increase at positive rate, bounded away from $0$, until it reaches
$0$ within a finite time. Thereafter, $\ov\psi_{1j}(t)$ will stay at 0. 
(The argument is very similar to Case b in the proof of Proposition~\ref{th-fluid-drains}.)\\
Case b:
If type $1$ is a leaf, then $\ov q_1(t)$ must decrease and
$\ov\psi_{1j}(t)$ increase at the same rate (positive, bounded away from $0$), until
the entire queue (if any) ``relocates into'' $\ov\psi_{1j}$; after that time,
$\ov\psi_{1j}(t)$ and $\ov q_1(t)=0$ will not change.\\
We see that in either case a or b, after a finite time, 
the highest priority activity $(1j)$ can be in a sense ``ignored".
This allows us to proceed by induction on the activities, from the highest priority to the lowest,
to check that by some finite time $T$ 
(depending on $K$) the hydrodynamic model gets into a state
$\ov f(T)$, satisfying conditions of the theorem, and will
stay in this state for all $t\ge T$. Since all HMs are uniformly Lipschitz,
we obviously have a uniform bound $\max_{t\ge 0} |\ov f(t)| \le CK$ for some $C$.

Furthermore, since all $\ov x_i(t)$ do not change with time, 
the linear mapping $L$ is as follows: 
$L(u_{ij},w_i) = (c_{ij},0)$ where $(c_{ij})$ is the unique solution to
\begin{subequations}\label{eqn:starting state-hydro}
\begin{equation}
\sum_j u_{ij} + w_i = \sum_j c_{ij},~~ \forall i \in \CI
\end{equation}
\begin{equation}
\sum_i c_{ij} = 0,~~ \forall j < J
\end{equation}
\end{subequations}
\end{proof}

\begin{remark}
Examination of the proof of Theorem~\ref{hydro-behavior}
reveals that the HM for any initial state is in fact unique.
Moreover, with a little further argument, it is easy to show that an HM depends
on the initial state continuously. Furthermore, the HMs are 
scalable: if $(\ov f(t), ~t\ge 0)$ is an HM, then so is $(\ov f(ct)/c, ~t\ge 0)$
for any $c>0$. From here, it is easy to find that
the theorem statement holds for a constant $C$ independent of $K$ and for $T=CK$.
We will not need these stronger properties in this paper.
\end{remark}

For future reference, note that $L(u_{ij},w_i)= (c_{ij},0)$ is a function only of the vector $(z_i)$, where $z_i = w_i + \sum_j u_{ij}$. The corresponding linear mapping from $(z_i)$ to $(c_{ij})$, we denote $L'$.

\subsection{Local-fluid scaling}
\label{subsec-lf}

The process under \emph{local fluid scaling} is as follows. For each $r$ consider
\[
(\tilde \psi^r_{ij}(t), \tilde q^r_i(t)) \equiv \tilde f^r(t) = h(r)^{-1} F^r(t).
\]
We will also denote $\tilde x_i^r(t) =  h(r)^{-1} X_i^r(t) \equiv \tilde q^r_i(t) + \sum_j \tilde\psi^r_{ij}(t)$.

Since $\tilde \psi^r_{ij}(\cdot)$ (as well as $\ov \psi^r_{ij}(\cdot)$) is centered
before it is scaled in space, we in particular have (by Assumption~\ref{ass:rho}) that
 $\sum_i \tilde\psi^r_{ij}(t) \leq 0$ for all $j < J$ at all times $t$. 

\begin{theorem}
\label{thm:local fluid limit-simple}
Consider a sequence of deterministic realizations, such that the driving realizations
satisfy FSLLN conditions, namely:
\beql{eq-lln-lf1}
(h(r)^{-1} ( A^r_i(t) - \lambda_i r t), ~t\ge 0) \to 0, ~~\mbox{u.o.c.}, ~\forall i
\end{equation}
\beql{eq-lln-lf2}
\Bigl(h(r)^{-1} \bigl( D^r_i(t) - \mu_{ij} \int_0^t \Psi^r_{ij}(s) ds \bigr), ~t\ge 0 \Bigr) \to 0, 
 ~~\mbox{u.o.c.}, ~\forall (ij).
\end{equation}
Assume that the initial states converge to a fixed vector
$(\tilde  \psi^r_{ij}(0),\tilde q^r_i(0)) \to (\tilde  \psi_{ij}(0),  \tilde q_i(0))$. 
Further assume that $\tilde q_i(0)=0, \forall i,$ and 
$\sum_{i} \tilde \psi_{ij}(0)=0$ for all $j<J$. 
(In other words, 
$(\tilde  \psi_{ij}(0),  \tilde q_i(0))=L(\tilde  \psi_{ij}(0),  \tilde q_i(0))$.)
Then, 
for any subsequence of $r$ there exists a further subsequence along which  
\beql{eq-new2}
(\tilde \psi^r_{ij}(\cdot), \tilde q^r_i(\cdot)) \to (\tilde\psi_{ij}(\cdot), \tilde q_i(\cdot)),
~~\mbox{u.o.c.},
\end{equation}
where $(\tilde\psi_{ij}(\cdot), \tilde q_i(\cdot)$ is a set of Lipschitz functions,
with initial condition \linebreak $(\tilde \psi_{ij}(0), \tilde q_i(0))$,
satisfying (local fluid model) conditions \eqref{eqn:local fluid model}.
Moreover, 
these limit trajectories  depend continuously on the initial state and are such that, 
uniformly on all of them,
\beql{eq-new1}
|(\tilde \psi_{ij}(t), \tilde q_i(t))| \le 
|(\tilde \psi_{ij}(0), \tilde q_i(0))| c_1 e^{-c_2 t}, ~~\forall t\ge 0,
\end{equation}
where $c_1, c_2 >0$ are fixed constants.
\end{theorem}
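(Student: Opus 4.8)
The plan is to treat the convergence to a local-fluid model exactly as in Proposition~\ref{th-local-fluid-convergence} and Theorem~\ref{th-hydro-convergence}: the FSLLN conditions \eqref{eq-lln-lf1}--\eqref{eq-lln-lf2}, together with the conservation laws and the LAP priority rules, force any subsequential u.o.c.\ limit to be Lipschitz and to satisfy \eqref{eqn:local fluid model}. First I would record the one genuinely new feature of this scaling: since time is \emph{not} rescaled, the service integral $\int_0^t \Psi^r_{ij}(s)\,ds$ retains the $h(r)$-order deviation $\tilde\psi^r_{ij}$ at leading order. Using the equilibrium (fluid stationarity) identity $\sum_j \mu_{ij}\psi^*_{ij} = \lambda_i$, the $rt$-order arrival and service terms cancel, and one is left with the nontrivial limiting dynamics
\[
\frac{d}{dt}\tilde x_i(t) = -\sum_j \mu_{ij}\tilde\psi_{ij}(t), \qquad \forall i\in\CI,
\]
in contrast with the hydrodynamic model, where $\ov x_i$ is frozen.

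The next step is to reduce \eqref{eqn:local fluid model} to a linear ODE. Since we start at a fixed point of $L$ (i.e.\ $\tilde q_i(0)=0$ and $\sum_i\tilde\psi_{ij}(0)=0$ for $j<J$), I would argue that the trajectory remains in this manifold for all $t\ge0$: the constraints $\tilde q_i(t)=0$ and $\sum_i\tilde\psi_{ij}(t)=0$ ($j<J$) are preserved by the LAP dynamics once the (infinitely faster, on this time scale) hydrodynamic relaxation of Theorem~\ref{hydro-behavior} has completed, which here it has at $t=0$. Consequently $\tilde\psi_{ij}(t) = [L'(\tilde x(t))]_{ij}$ at every regular $t$, and substituting into the displayed dynamics yields the closed linear system $\dot{\tilde x} = -M\tilde x$, where $M x = \bigl(\sum_j \mu_{ij}[L'(x)]_{ij}\bigr)_{i}$. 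Because the evolution is linear from the outset, the continuous (indeed linear) dependence on the initial state, and the uniformity over trajectories asserted in \eqref{eq-new1}, are automatic once decay is established.

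The main obstacle is showing that $-M$ is stable, i.e.\ that the spectrum of $M$ lies in the open right half-plane; this is precisely what gives \eqref{eq-new1} with constants $c_1,c_2$ depending only on $M$. I would prove it by induction on the activity priorities, peeling leaves as in Theorem~\ref{hydro-behavior} and Proposition~\ref{th-fluid-drains}. For the highest-priority activity $(1j)$: if type $1$ is a leaf then $\CS(1)=\{j\}$ and $\tilde x_1=\tilde\psi_{1j}$, so the dynamics decouple and give $\dot{\tilde x}_1 = -\mu_{1j}\tilde x_1$, a pure exponential decay at rate $\mu_{1j}>0$; if pool $j$ is a leaf, the corresponding occupancy is pinned within the manifold and can likewise be eliminated. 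Removing the leaf reduces the tree, and one induces on the remaining strictly lower-priority activities, each contributing a decay rate bounded away from $0$. This should exhibit $M$, in the priority-ordered basis, as (block-)triangular with strictly positive diagonal, hence Hurwitz. I expect the bookkeeping of which capacity and arrival-rate adjustments accompany each peeling step, and the verification that the off-diagonal couplings do not disrupt the triangular structure, to be the delicate part.

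As an alternative or consistency check for the stability step, I would consider the workload $\tilde W = \sum_i \nu_i \tilde x_i$ and compute, using the dual relation $\nu_i\mu_{ij}=\alpha_j/\beta_j$ together with $\sum_i\tilde\psi_{ij}=0$ for $j<J$, that
\[
\frac{d}{dt}\tilde W = -\frac{\alpha_J}{\beta_J}\sum_i \tilde\psi_{iJ};
\]
expressing $\sum_i\tilde\psi_{iJ}$ in terms of $\tilde W$ through $L'$ should yield a scalar drift that pins down an admissible $c_2$. Either route reduces the theorem to a stable linear ODE, and linearity then makes the constants in \eqref{eq-new1} uniform over all initial states, completing the proof.
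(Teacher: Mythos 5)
Your identification of the limiting dynamics is correct and matches the paper: the equilibrium identity $\lambda_i=\sum_j\mu_{ij}\psi^*_{ij}$ cancels the $O(r)$ terms and leaves $\frac{d}{dt}\tilde x_i(t)=-\sum_j\mu_{ij}\tilde\psi_{ij}(t)$, and the paper establishes the decay \eqref{eq-new1} essentially the way you propose --- conditions \eqref{eqn:local fluid model} are solved sequentially along the activity tree (leaf peeling), giving a linear ODE whose convergence to $0$ is proved by induction on activities. However, there is a genuine gap at the step you dispose of in your first sentence and again in the ``remains in this manifold'' assertion. You cannot treat the convergence ``exactly as in Proposition~\ref{th-local-fluid-convergence} and Theorem~\ref{th-hydro-convergence}'': in those results time is rescaled in proportion to space, so the FSLLN for the driving processes directly yields asymptotic Lipschitz bounds on the scaled trajectories. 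Here time is \emph{not} rescaled while space is divided by $h(r)=o(r)$; the instantaneous drift of $\tilde x^r_i$ is $-\sum_j\mu_{ij}\tilde\psi^r_{ij}(t)$, and $\tilde\psi^r_{ij}(t)$ is a priori only bounded by $O(r/h(r))\to\infty$; likewise nothing a priori keeps $\tilde q^r_i$ near $0$. So precompactness, the Lipschitz property of any limit, and confinement of the limit to the manifold $\{f = Lf\}$ are precisely the nontrivial content of the theorem (the paper says this explicitly), and your proposal assumes them rather than proves them.

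The mechanism your proposal gestures at (``infinitely faster hydrodynamic relaxation'') but never executes is the core of the paper's proof: introduce the stopping time $\tau(r)=\min\{t:\ \|(\tilde\psi^r_{ij}(t))\|\ge C_4\}$ with $C_4>\|(\tilde\psi_{ij}(0))\|$, so that up to $\tau(r)$ the trajectories $\tilde x^r_i,\tilde q^r_i$ are ``almost Lipschitz'' with constant $C_4\|(\mu_{ij})\|$; then rule out $\tau(r)\to 0$ and prove $|\tilde f^r(t)-L\tilde f^r(t)|\to 0$ uniformly by contradiction: take a sliding window $[\tau_1(r)-Th(r)/r,\ \tau_1(r)]$ around a presumed deviation, stretch it by the factor $r/h(r)$ to obtain hydrodynamic-scaled trajectories on $[0,T]$, extract an HM limit, and invoke Theorem~\ref{hydro-behavior} (after the finite HM relaxation time, $(\ov\psi_{ij}(T))=L'(\ov x_i(0))$) to contradict the deviation. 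Only after this does one know that the limit exists, is Lipschitz, has $\tilde q_i\equiv 0$ and $\sum_i\tilde\psi_{ij}\equiv 0$ for $j<J$, and hence satisfies your closed system $\tilde\psi=L'(\tilde x)$, $\dot{\tilde x}=-M\tilde x$; the final bootstrapping (choosing $C_4$ large so the time horizon $\epsilon_4$ can be made arbitrary) also relies on the decay bound. Your statement that the constraints ``are preserved by the LAP dynamics \dots which here it has [completed] at $t=0$'' is exactly the claim that requires this windowed-HM argument; asserting it is not proving it. (A minor further point: your workload consistency check computes $\frac{d}{dt}\tilde W$ correctly, but since $\sum_i\tilde\psi_{iJ}$ is a general linear functional of $\tilde x$, not a multiple of $\tilde W$, that scalar identity alone cannot pin down $c_2$.)
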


The local fluid model conditions are as follows:
\begin{subequations}\label{eqn:local fluid model}
\begin{equation}
\tilde q_i(t) = 0, \quad \forall i \in \CI
\end{equation}
\begin{equation}
\sum_j \tilde \psi_{ij}(t) = \sum_j \tilde \psi_{ij}(0) - \sum_j\int_0^t \mu_{ij} \tilde \psi_{ij}(s) ds, \quad \forall i \in \CI
\end{equation}
\begin{equation}
\sum_i \tilde \psi_{ij}(t) = 0, \quad \forall j < J
\end{equation}
\end{subequations}
The $I+J-1$ equations for the $I+J-1$ functions $(\tilde \psi_{ij}(\cdot))$ 
can be solved sequentially, in order of decreasing activity priority, since the highest unsolved-for priority will always correspond to either a customer-type or a server-type leaf of the remaining activity tree. Any Lipschitz trajectory satisfying 
\eqn{eqn:local fluid model} 
%(not necessarily with $|(\tilde \psi_{ij}(0), \tilde q_i(0))|=1$), 
we will call a {\em local fluid model} (LFM). Conditions \eqn{eqn:local fluid model} reduce to a system of linear ODEs
for $(\tilde \psi_{ij}(t))$,  which of course implies the continuous dependence
on initial state; the fact that each LFM converges to $0$
is easily established, again by induction on activities; therefore, we obtain the uniform
exponential bound
\eqn{eq-new1}.

 Analogously to $\tilde f^r(\cdot)=(\tilde \psi^r_{ij}(\cdot), \tilde q^r_i(\cdot))$,
we will use shorter notation $\tilde f(\cdot)=(\tilde \psi_{ij}(\cdot), \tilde q_i(\cdot))$.

\begin{proof}[Proof of Theorem~\ref{thm:local fluid limit-simple}]
The non-trivial part of the proof is  
showing the Lipschitz property 
of the limit $\tilde f(\cdot)$,
because it is no longer a simple consequence 
of the FSLLN for the driving processes (as it was for the fluid and hydrodynamic
limits). This is because the arrival and service rates in the system (with index $r$)
are $O(r)$, while the space is scaled down by $h(r)=o(r)$. For the same reason,
it is also not ``automatic'' that the limit queues $\tilde q_i(\cdot)$ stay
at $0$. This difficulty is resolved as follows. 
Consider arbitrary number 
$C_4 > \|(\tilde  \psi_{ij}(0))\|$, and the random time 
$\tau(r)=\min\{t~|~\|(\tilde  \psi_{ij}^r(t))\|\ge C_4\}$. Then, speaking informally 
(the formal statements are given below), the trajectory $\tilde  x_{i}^r(\cdot)$ for each $i$
must be ``almost Lipschitz'' in the interval
$[0,\tau(r)]$, with the Lipschitz constant $\eta = C_4 \|(\mu_{ij})\|$,
because the absolute
difference between
the arrival and departure rates (scaled down by $h(r)$) is upper 
bounded by $\eta$ in $[0,\tau(r)]$;  similarly, 
each queue
length trajectory $\tilde  q_{i}^r(\cdot)$ is ``almost Lipschitz'' in $[0,\tau(r)]$.
%{\em as long as this queue length is non-zero}.

Formally, it is easy to show the following:
if $\tau(r)\to 0$ along some subsequence,
then (denoting $\tilde x_i(0)=\sum_j \tilde \psi_{ij}(0)$)
\beql{cond11}
\sup_{[0,\tau(r)]} \|(\tilde  x_{i}^r(t)) - (\tilde  x_{i}(0))\| \to 0,
~~~~\sup_{[0,\tau(r)]} \|(\tilde  q_{i}^r(t)) - (\tilde  q_{i}(0))\| \to 0.
\end{equation}
If $\liminf  \tau(r) > \epsilon_4 >0$ along some subsequence, then there exists
a further subsequence along which 
\beql{cond22}
(\tilde  x_{i}^r(\cdot))\to (\tilde  x_{i}(\cdot)),~~~~
(\tilde  q_{i}^r(\cdot))\to (\tilde  q_{i}(\cdot)),
\end{equation}
where the convergences are uniform in $[0,\epsilon_4]$, and
each function $\tilde  x_{i}(\cdot)$ and $\tilde  q_{i}(\cdot)$
is Lipschitz with constant $\eta$
in $[0,\epsilon_4]$.

In the case $\tau(r)\to 0$, as a consequence of \eqn{cond11}, we also must have
\beql{eq-contra1}
\sup_{[0,\tau(r)]} \|(\tilde  \psi_{ij}^r(t)) - (\tilde  \psi_{ij}(0))\| \to 0.
\end{equation}
Indeed, if \eqn{eq-contra1} does not hold, then for some fixed $\delta>0$,
we can choose a subsequence of $r$ and a corresponding sequence of
times $\tau_1(r)\in [0,\tau(r)]$ such that 
$\|(\tilde  \psi_{ij}^r(t)) - (\tilde  \psi_{ij}(0))\|\ge \delta$
for the first time.
Fix $T>0$ (we will specify the choice later), 
and consider the sequence of times $\tau_1(r)-T h(r)/r$.
Suppose first that $\tau_1(r)-T h(r)/r \ge 0$ for infinitely many $r$;
then, we consider a further subsequence along which this holds,
and the trajectory on the time interval $[\tau_1(r)-T h(r)/r, \tau_1(r)]$.
Now, if we reset the time origin to $\tau_1(r)-T h(r)/r$ and ``stretch''
the interval $[\tau_1(r)-T h(r)/r, \tau_1(r)]$ by the factor $r/h(r)$,
we will obtain hydrodynamic-scaled trajectories in the interval $[0,T]$.
We then choose a further subsequence of $r$ along which these trajectories 
converge (u.o.c.) to an HM. This HM $\ov f(\cdot)$ will be such that
$(\ov x_i(0))= (\tilde  x_{i}(0))$, 
$\|(\ov \psi_{ij}(0)) - (\tilde  \psi_{ij}(0))\|\le \delta$,
all $\ov q_i(0)=0$, and $\|(\ov \psi_{ij}(T)) - (\tilde  \psi_{ij}(0))\|\ge \delta$.
 We now specify the choice of $T$: it is large enough
so that $(\ov \psi_{ij}(T))=L'(\ov x_{i}(0))$.
But, $(\ov x_{i}(0))=(\tilde x_{i}(0))$, which means
$(\ov \psi_{ij}(T))=L'(\tilde x_{i}(0))=(\tilde  \psi_{ij}(0))$ -- a contradiction.
The contradiction in the case when $\tau_1(r)-T h(r)/r < 0$ for all large $r$
is obtained similarly, except for all $r$ we use time interval
$[0,T h(r)/r]$ to construct a contradicting HM.
Thus, we proved \eqn{eq-contra1}. 
\iffalse
(If not, we would be able to construct  a hydrodynamic model that violates
the condition that, after a finite time, the vector of occupancies $(\ov \psi_{ij}(t))$ is uniquely determined as $L'(\ov x_i(0))$.)
\fi
But, this leads to a contradiction 
with the definition of $\tau(r)$. We conclude that the case $\tau(r)\to 0$
is in fact impossible, and we always have the case $\liminf  \tau(r) > \epsilon_4 >0$ and 
\eqn{cond22}. 

Next, in addition to \eqn{cond22}, we show that 
\beql{cond22add}
|\tilde f^r(t) - L \tilde f^r(t)| \to 0,~~~
\mbox{in particular}~|(\tilde \psi_{ij}^r(t)) - L'(\tilde x_{i}^r(t))| \to 0,
\end{equation}
uniformly in $[0,\epsilon_4]$.
(This is, again, proved by contradiction.
If \eqn{cond22add} would not hold, we would be able to construct 
an HM violating the claim of Theorem~\ref{hydro-behavior} 
that $\ov f(t) = L \ov f(t)$
must hold after a finite time. We omit details which are analogous to those
in the proof of \eqn{eq-contra1} above.)

In $[0,\epsilon_4]$ we also have
$$
\tilde x_i(t) = \tilde x_i(0) - \tilde d_i(t), ~~~\forall i,
$$
where the Lipschitz function $\tilde d_i(\cdot)$ is a limit (along a subsequence) of \linebreak
$\sum_j \int_0^t \mu_{ij} \tilde \psi_{ij}^r(s) ds$.

The above properties lead to conditions \eqn{eqn:local fluid model} on the interval
$[0,\epsilon_4]$. \linebreak Namely, we formally define $(\tilde \psi_{ij}(\cdot))=L'(\tilde x_i(\cdot))$,
obtain the convergence $(\tilde \psi_{ij}^r(\cdot)) \to (\tilde \psi_{ij}(\cdot))$ from
\eqn{cond22add}, and then \eqn{eqn:local fluid model} follows.

Finally, as already observed earlier,
the linear ODE \eqn{eqn:local fluid model} solutions satisfy condition
\eqn{eq-new1}. In particular, each local fluid model remains bounded
in $[0,\infty)$. This in turn allows us to conclude that by choosing a sufficiently
large $C_4$, the corresponding $\epsilon_4$ can be arbitrarily large.
This completes the proof.
\end{proof}

We will actually need a generalized version of Theorem~\ref{thm:local fluid limit-simple}.

\begin{theorem}
\label{thm:local fluid limit-general}
Consider a sequence of deterministic realizations, such that the driving realizations
satisfy \eqn{eq-lln-lf1}-\eqn{eq-lln-lf2}.
Assume that the initial states converge to a fixed vector
$\tilde f^r(0) \to \tilde f^{\circ}(0)$.
%$(\tilde  \psi^r_{ij}(0),\tilde q^r_i(0)) \to (\tilde  \psi_{ij}^{\circ}(0),  \tilde q_i^{\circ}(0))$. 
(We do {\em not} assume $\tilde f^{\circ}(0)=L\tilde f^{\circ}(0)$.)
%$(\tilde  \psi_{ij}^{\circ}(0),  \tilde q_i^{\circ}(0))=
%L(\tilde  \psi_{ij}^{\circ}(0),  \tilde q_i^{\circ}(0))$.)
Then, 
for any subsequence of $r$ there exists a further subsequence along which  
\beql{eq-new2555}
\tilde f^r(\cdot) \to \tilde f(\cdot),
%(\tilde \psi^r_{ij}(\cdot), \tilde q^r_i(\cdot)) \to (\tilde\psi_{ij}(\cdot), \tilde q_i(\cdot)),
\end{equation}
uniformly on compact subsets of $[0,\infty)$ not containing $0$,
where $\tilde f(\cdot)$ is a local fluid model
with initial state $\tilde f(0)= L\tilde f^{\circ}(0)$.
(Recall that \eqn{eq-new1} holds for any LFM.) 
\iffalse
Moreover, 
these limit trajectories are such that, 
uniformly on all of them,
\beql{eq-new1555}
|(\tilde \psi_{ij}(t), \tilde q_i(t))| \le 
|(\tilde \psi_{ij}(0), \tilde q_i(0))| c_1 e^{-c_2 t}, ~~\forall t\ge 0,
\end{equation}
where $c_1, c_2 >0$ are fixed constants.
\fi
In addition, for any $K>0$ there exists $C=C(K)>0$ such that
$|\tilde f^{\circ}(0)|\le K$ implies
\beql{eq-jump-bound}
\limsup_{r\to\infty} \sup_{0\le t \le 1} 
|\tilde f^r(t)|\le CK.
\end{equation}
\end{theorem}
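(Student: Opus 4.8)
The plan is to exploit the exact relation between the local-fluid and hydrodynamic scalings, namely $\ov f^r(t) = \tilde f^r(h(r)r^{-1}t)$, in order to resolve the initial ``boundary layer'' near $t=0$ by a hydrodynamic argument, and then to invoke Theorem~\ref{thm:local fluid limit-simple} for the remaining post-relaxation evolution. Since $h(r)r^{-1}\to 0$, an entire hydrodynamic time interval $[0,T]$ occupies only the vanishing local-fluid interval $[0,Th(r)/r]$; this is precisely why the assumption $\tilde f^{\circ}(0)=L\tilde f^{\circ}(0)$ can be dropped, at the cost of excluding $t=0$ from the u.o.c. convergence \eqn{eq-new2555}.

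First I would fix $K\ge |\tilde f^{\circ}(0)|$ and observe that, under the time change above, the FSLLN conditions \eqn{eq-lln-lf1}--\eqn{eq-lln-lf2} imply the hydrodynamic FSLLN conditions \eqn{eq-lln-hydro1}--\eqn{eq-lln-hydro2} (indeed, evaluating the local-fluid expressions at argument $h(r)r^{-1}t$ yields the hydrodynamic expressions, and $h(r)r^{-1}t\to 0$ on compacts). Hence, passing to a further subsequence, Theorem~\ref{th-hydro-convergence} gives $\ov f^r(\cdot)\to\ov f(\cdot)$ u.o.c., where $\ov f(\cdot)$ is a hydrodynamic model with $\ov f(0)=\tilde f^{\circ}(0)$. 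By Theorem~\ref{hydro-behavior} there is a finite hydrodynamic time $T=T(K)$ with $\ov f(T)=L\tilde f^{\circ}(0)$, $\ov f(t)\equiv\ov f(T)$ for $t\ge T$, and $\max_{t\ge 0}|\ov f(t)|\le CK$. Translating back through the time change, this yields $\limsup_r\sup_{[0,Th(r)/r]}|\tilde f^r(t)| = \limsup_r\sup_{[0,T]}|\ov f^r(s)|\le CK$ and, crucially, $\tilde f^r(Th(r)/r)=\ov f^r(T)\to\ov f(T)=L\tilde f^{\circ}(0)$.

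Next I would restart the clock at the vanishing time $Th(r)/r$ and apply Theorem~\ref{thm:local fluid limit-simple} to the shifted process. The key structural fact is that $L$ is idempotent and its image is precisely the set of states with $\tilde q_i=0$ for all $i$ and $\sum_i\tilde\psi_{ij}=0$ for all $j<J$; both follow immediately from \eqn{eqn:starting state-hydro} and the uniqueness of the solution there, so $L\tilde f^{\circ}(0)$ satisfies the hypotheses imposed in Theorem~\ref{thm:local fluid limit-simple} on the limiting initial state. The FSLLN conditions for the shifted process follow from \eqn{eq-lln-lf1}--\eqn{eq-lln-lf2} because the shift is $o(1)$. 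Passing to one more subsequence, the shifted process converges u.o.c. to the local fluid model $\tilde f(\cdot)$ with $\tilde f(0)=L\tilde f^{\circ}(0)$, which is unique since \eqn{eqn:local fluid model} is a linear ODE; because $Th(r)/r\to 0$, this is exactly the u.o.c. convergence on compact subsets of $[0,\infty)$ not containing $0$ asserted in \eqn{eq-new2555}, and it identifies the limit. Finally I would obtain \eqn{eq-jump-bound} by splitting $[0,1]$ at $Th(r)/r$: the first piece is bounded by $CK$ from the hydrodynamic estimate above, while the second is bounded by $\sup_{t\ge 0}|\tilde f(t)|\le c_1\|L\|\,K$ via \eqn{eq-new1} (using $|L\tilde f^{\circ}(0)|\le\|L\|K$), so $\limsup_r\sup_{[0,1]}|\tilde f^r(t)|\le C'K$ for a suitable $C'=C'(K)$.

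The main obstacle I anticipate is the rigorous patching of the two scalings across the $r$-dependent switchover time $Th(r)/r$: one must verify that the hydrodynamic limit genuinely captures the fast relaxation of the queues and the server-pool imbalance onto the projection $L\tilde f^{\circ}(0)$, so that the second-phase initial state converges to an \emph{exact} fixed point of $L$ (and not merely close to one), and that the FSLLN hypotheses survive the vanishing time shift so that Theorem~\ref{thm:local fluid limit-simple} is legitimately applicable to the shifted process. Everything else — the time-change identity, the transfer of the FSLLN conditions, the idempotency of $L$, and the elementary bound \eqn{eq-jump-bound} — is routine once this boundary-layer step is in place.
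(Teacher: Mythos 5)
Your proposal is correct and follows essentially the same route as the paper's own (sketched) proof: analyze the boundary layer $[0,T\,h(r)/r]$ under hydrodynamic scaling via Theorem~\ref{th-hydro-convergence} and Theorem~\ref{hydro-behavior}, conclude $\tilde f^r(T h(r)/r)\to L\tilde f^{\circ}(0)$ together with the uniform $CK$ bound, and then run the argument of Theorem~\ref{thm:local fluid limit-simple} on the process restarted at that vanishing time, obtaining \eqn{eq-new2555} away from $t=0$ and \eqn{eq-jump-bound} by splitting $[0,1]$ at the switchover time. The only cosmetic difference is that you invoke Theorem~\ref{thm:local fluid limit-simple} as a black box for the shifted process (justifying that the FSLLN hypotheses and the fixed-point property of the limiting initial state survive the shift), whereas the paper says to repeat that proof; these are equivalent.
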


\begin{proof}
The proof is a slight generalization of that of Theorem~\ref{thm:local fluid limit-simple}.
For a fixed $T_5>0$ consider the interval $[0,T_5 h(r)/r]$, and the corresponding
hydrodynamic-scaled trajectories in the interval $[0,T_5]$;
$T_5$ is chosen large enough so that the hydrodynamic model
reaches state $\tilde f(0) = L\tilde f^{\circ}(0)$ by time $T_5$.
Then, we must have $\tilde f^r(T_5 h(r)/r)
\to \tilde f(0)$; moreover, by Theorem~\ref{hydro-behavior}
and Corollary~\ref{close-hydro}, 
$$
\limsup_{r\to\infty} \sup_{0\le t \le T_5 h(r)/r} 
|\tilde  f^r(t)|\le CK,
$$
for some $C$, when $|\tilde  f^{\circ}(0)|\le K$.

Then, we consider local fluid scaled trajectories starting time point \linebreak
$T_5 h(r)/r$ (as opposed to $0$), and 
the rest of the proof is essentially same as that of 
Theorem~\ref{thm:local fluid limit-simple}.
\end{proof}

 The following corollary is derived from Theorem~\ref{thm:local fluid limit-general}
analogously to the way Corollary~\ref{close-hydro} was derived from
Theorem~\ref{th-hydro-convergence}.

\begin{coroll}
\label{close-lf}
 For any fixed $K>0$, there exists $C=C(K)>0$ such that the following holds.
For any fixed $T>0$, $\delta_2>0$ and  $\epsilon_2>0$,
there exists a sufficiently small $\delta_3>0$
such that: uniformly on all $|\tilde f^r(0)| \le K$ and all sufficiently
large $r$, conditions 
\beql{eq-lln-lf111}
\max_i \sup_{[0,T]} |h(r)^{-1} ( A^r_i(t) - \lambda_i r t)| \le \delta_3,
\end{equation}
\beql{eq-lln-lf222}
\max_{(ij)} \sup_{[0,T]} 
|h(r)^{-1} \bigl( D^r_i(t) - \mu_{ij} \int_o^t \Psi^r_{ij}(s) ds \bigr)| \le \delta_3,
\end{equation}
imply 
\beql{eq-lln-lf444}
\sup_{[0,T]} |\tilde f^r(t)| \le (K+1)C,
\end{equation}
\beql{eq-lln-lf333}
\sup_{[\epsilon_2,T]} |\tilde f^r(t)-\tilde f(t)| \le \delta_2,
\end{equation}
where $\tilde  f(\cdot)$ is a local fluid model with 
 $|\tilde  f(0) - L \tilde f^r(0)| \le \delta_2$.
\end{coroll}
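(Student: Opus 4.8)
The plan is to mimic the contradiction argument used for Corollary~\ref{close-hydro}, replacing the hydrodynamic convergence result by its local-fluid counterpart, Theorem~\ref{thm:local fluid limit-general}. First I would fix $K>0$ and choose the constant $C=C(K)$: take it to be the constant furnished by the bound \eqn{eq-jump-bound} of Theorem~\ref{thm:local fluid limit-general} on the interval $[0,1]$, enlarged if necessary so that, in addition, every local fluid model $\tilde f(\cdot)$ whose initial state is of the form $L\tilde f^{\circ}(0)$ with $|\tilde f^{\circ}(0)|\le K$ satisfies $\sup_{t\ge 0}|\tilde f(t)|\le CK$. This is possible because $L$ is a fixed linear map and \eqn{eq-new1} gives a uniform exponential bound on all LFMs in terms of their (bounded) initial data. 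Note that $C$ depends only on $K$, as required, and in particular not on $T$, $\delta_2$, or $\epsilon_2$.

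Next, with $T$, $\delta_2$, $\epsilon_2$ fixed, I would argue by contradiction. If the claim failed, there would be a sequence $\delta_3\downarrow 0$ and a corresponding sequence $r=r(\delta_3)\uparrow\infty$ together with deterministic initial states $|\tilde f^r(0)|\le K$ for which \eqn{eq-lln-lf111}--\eqn{eq-lln-lf222} hold (with that $\delta_3$) but the conclusion fails, i.e. either \eqn{eq-lln-lf444} is violated or no admissible local fluid model witnesses \eqn{eq-lln-lf333}. Since the initial states lie in the compact ball $\{|x|\le K\}$, I would pass to a subsequence along which $\tilde f^r(0)\to\tilde f^{\circ}(0)$ with $|\tilde f^{\circ}(0)|\le K$. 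Because $\delta_3\to 0$, conditions \eqn{eq-lln-lf111}--\eqn{eq-lln-lf222} become exactly the FSLLN hypotheses \eqn{eq-lln-lf1}--\eqn{eq-lln-lf2} (in the finite-time-interval version), so Theorem~\ref{thm:local fluid limit-general} applies: along a further subsequence $\tilde f^r(\cdot)\to\tilde f(\cdot)$ uniformly on compact subsets of $(0,\infty)$, where $\tilde f(\cdot)$ is a local fluid model with $\tilde f(0)=L\tilde f^{\circ}(0)$, and moreover \eqn{eq-jump-bound} holds.

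From this single limit I would read off both halves of the conclusion, deriving a contradiction. The witnessing LFM is $\tilde f(\cdot)$ itself: since $L$ is a fixed (hence continuous) linear map and $\tilde f^r(0)\to\tilde f^{\circ}(0)$, we have $|\tilde f(0)-L\tilde f^r(0)|=|L\tilde f^{\circ}(0)-L\tilde f^r(0)|\to 0$, so for all large $r$ this is $\le\delta_2$, making $\tilde f(\cdot)$ admissible; and the uniform convergence on $[\epsilon_2,T]\subset(0,\infty)$ gives $\sup_{[\epsilon_2,T]}|\tilde f^r(t)-\tilde f(t)|\to 0\le\delta_2$, contradicting the failure of \eqn{eq-lln-lf333}. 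For the bound \eqn{eq-lln-lf444} I would split $[0,T]=[0,1]\cup[1,T]$: on $[0,1]$, \eqn{eq-jump-bound} yields $\limsup_r\sup_{[0,1]}|\tilde f^r|\le CK$; on $[1,T]$, the u.o.c.\ convergence plus the LFM bound $\sup_t|\tilde f(t)|\le CK$ (by the enlarged choice of $C$, since $\tilde f(0)=L\tilde f^{\circ}(0)$ with $|\tilde f^{\circ}(0)|\le K$) give $\limsup_r\sup_{[1,T]}|\tilde f^r|\le CK$. Hence $\limsup_r\sup_{[0,T]}|\tilde f^r|\le CK<(K+1)C$, contradicting the failure of \eqn{eq-lln-lf444}.

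The main obstacle, and the reason the statement is phrased with a cutoff $\epsilon_2>0$ in \eqn{eq-lln-lf333}, is the boundary layer at $t=0$: Theorem~\ref{thm:local fluid limit-general} only provides u.o.c.\ convergence away from $0$, because the true process must first traverse the fast hydrodynamic ``collapse'' onto the set $\{\tilde f=L\tilde f\}$ before it can track the LFM. Thus I cannot hope for uniform trajectory closeness on all of $[0,T]$; the best available near $0$ is the crude uniform bound \eqn{eq-jump-bound}. This is precisely why the uniform estimate \eqn{eq-lln-lf444} is stated as a bound rather than as a convergence, and why combining the $[0,1]$ bound with the global LFM bound (rather than trajectory convergence) is the natural way to cover all of $[0,T]$.
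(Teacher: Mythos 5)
Your proposal is correct and follows essentially the same route the paper intends: the paper gives no explicit proof, stating only that the corollary ``is derived from Theorem~\ref{thm:local fluid limit-general} analogously to the way Corollary~\ref{close-hydro} was derived from Theorem~\ref{th-hydro-convergence},'' and your argument is exactly that analogous compactness-and-contradiction derivation (sequence $\delta_3\downarrow 0$, $r\uparrow\infty$, convergent initial states, extraction of a subsequence converging to a local fluid model). Your added care in choosing $C=C(K)$ independently of $T,\delta_2,\epsilon_2$ via \eqn{eq-jump-bound} and \eqn{eq-new1}, and in splitting $[0,T]$ to handle the boundary layer at $t=0$, fills in precisely the details the paper leaves implicit.
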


\subsection{Proof of Theorem~\ref{th-stationary-scale-main}(ii)}
\label{subsec-main-proof}

We are now in position to prove \eqn{eq-goal}, and then 
Theorem~\ref{th-stationary-scale-main}(ii). 
The basic idea is to consider the process in the interval $[0,T\log r]$,
subdivided into $\log r$ intervals, each being $T$-long.
(To be precise, we need to consider an integer number, say $\lfloor \log r \rfloor$,
of subintervals. This does not cause any difficulties besides making notation
cumbersome.)
Then, using the local fluid limit results, we show that, with high probability,
in each of the $T$-long subintervals, the norm $|F^r(t)|$ decreases by a factor
$\delta_6\in (0,1)$, unless the norm $|F^r(t)|$ at the beginning of the subinterval
is below $r^{1/2+\epsilon}$ -- in this case $|F^r(t)|$ will be bounded above by
$3C r^{1/2+\epsilon}$ in the entire subinterval (where $C$ is as in Corollary~\ref{close-lf}
 with $K=2$).
Now, if $\delta_6$ is small enough, so that
\beql{eq-del6}
\delta_6^{\log r} < r^{1/2+\epsilon}/r,~~~\mbox{that is}~~~\delta_6 < e^{-1/2+\epsilon},
\end{equation}
this means that $|F^r(t)|$ must ``dip''
below $r^{1/2+\epsilon}$ at least once, and therefore $|F^r(T\log r)|\le 3C r^{1/2+\epsilon}$
(with high probability). We proceed with the details.

Let us choose $\delta_6>0$ satisfying \eqn{eq-del6}, and then $\delta_2>0$ such that
$2\delta_2 < \delta_6$.
Denote by $|L|$
the norm of the linear operator $L$ (defined in Theorem~\ref{hydro-behavior}),
i.e. the maximum of absolute values of its eigenvalues.
Let us choose $T>0$ large enough so that 
(see Theorem~\ref{thm:local fluid limit-general})
$|L| c_1 e^{-c_2 T} < \delta_2$.

Suppose, for each $r$
the initial state is as in \eqn{eq-goal}. 
To prove \eqn{eq-goal}
it suffices to show that from any 
subsequence of $r$ we can find a further subsequence, along which 
\eqn{eq-goal} holds. So, consider any fixed subsequence, 
and a fixed $\delta_1>0$. 

In each of the subintervals $[(i-1)T,iT]$, $i=1,2,\ldots,\log r$, 
we consider the process with the time origin reset to $(i-1)T$ and the
corresponding initial state $F^r((i-1)T)$; and if $|F^r((i-1)T)|\le g(r)$,
then we set $h(r)=\max(|F^r((i-1)T)|,r^{1/2+\epsilon})$.
(If $|F^r((i-1)T)|> g(r)$ we set $h(r)=g(r)$ for completeness.)
By Proposition~\ref{thm:strong approximation-111},
we can choose a further
subsequence, with $r$ increasing sufficiently fast, so that, w.p.1,
conditions \eqn{eq-lln-lf111} and \eqn{eq-lln-lf222}
%and therefore the properties \eqn{eq-lln-lf444} and \eqn{eq-lln-lf333},
hold for all large $r$,
{\em simultaneously}
on each of the subintervals $[0,T]$, $[T,2T]$, ..., $[T(\log r -1),T\log r]$.
 We consider the corresponding local fluid scaled processes $\tilde f^r(\cdot)$,
with their corresponding $h(r)$,
on each of the subintervals; and apply Corollary~\ref{close-lf}.
We see that, with probability 1, for all large $r$, the following
holds for each interval $[(i-1)T,iT]$, $i=1,2,\ldots,\log r$:\\
if $|F^r((i-1)T)| \in [r^{1/2+\epsilon}, g(r)]$ then $|F^r(iT)| \le 
2\delta_2 |F^r((i-1)T)|$;\\
if $|F^r((i-1)T)| < r^{1/2+\epsilon}$ then $|F^r(iT)| \le 
3C r^{1/2+\epsilon}$. \\
Since $2\delta_2 < \delta_6$
we must have $|F^r(iT)| < r^{1/2+\epsilon}$ for at least one $i$.
Finally, we conclude that condition
$|F^r(T\log r)| \le  3C r^{1/2+\epsilon}$ must hold (w.p.1 for all 
large $r$). This obviously implies \eqn{eq-goal}.

\section{Acknowledgments}
The authors would like to thank the referees for their helpful comments.

\iffalse
\section{Conclusions}
We have demonstrated that the leaf activity priority scheme, in which customers are routed to servers along a basic activity tree with prescribed static priorities, stabilizes a strictly subcritically loaded system for all sufficiently large values
of the scaling parameter $r$.  Moreover, we have shown that the invariant distributions are tight and converge to a point mass at the equilibrium point on scales $r^{1/2+\epsilon}$ for any $\epsilon > 0$. 
%This is close to order-optimal behavior in the sense of \cite{Stolyar_Tezcan}; that is, no policy in the many-server setting %can achieve tightness of invariant distributions on scales smaller than $r^{1/2}$. 
We believe that the analysis of this work can be extended to show tightness of invariant distributions on 
the diffusion, $r^{1/2}$, scale. This is in contrast to the natural load balancing policy analyzed by \cite{Gurvich_Whitt} and others, which was shown in \cite{SY10} not to exhibit diffusion-scaled tightness, even under strictly subcritical load.
\fi

\bibliographystyle{acmtrans-ims}
\bibliography{tightness}

\begin{thebibliography}{}
\ifx \url   \undefined \def \url#1{#1}   \fi

\bibitem{Armony_Ward}
\textsc{Armony, M.} \textsc{and} \textsc{Ward, A.} (October 2011).
\newblock Blind fair routing in large-scale service systems.
\newblock Preprint.
\newblock \url{http://www-bcf.usc.edu/~amyward/ArWa_10_6_11}.

\bibitem{Atar2009}
\textsc{Atar, R.}, \textsc{Shaki, Y.}, \textsc{and} \textsc{Shwartz, A.}
  (2011).
\newblock A blind policy for equalizing cumulative idleness.
\newblock \emph{Queueing Systems\/}~\emph{67}, 275--293.

\bibitem{Csorgo_Horvath}
\textsc{Cs{\"{o}}rg{\H{o}}, M.} \textsc{and} \textsc{Horv{\'{a}}th, L.} (1993).
\newblock \emph{Weighted approximations in probability and statistics}.
\newblock Wiley.

\bibitem{Gurvich_Whitt}
\textsc{Gurvich, I.} \textsc{and} \textsc{Whitt, W.} (May 2009).
\newblock Queue-and-idleness-ratio controls in many-server service systems.
\newblock \emph{Mathematics of OR\/}~\textbf{34},~2, 363--396.

\bibitem{Mandelbaum_Stolyar}
\textsc{Mandelbaum, A.} \textsc{and} \textsc{Stolyar, A.~L.} (2004).
\newblock Scheduling flexible servers with convex delay costs: Heavy-traffic
  optimality of the generalized $c\mu$-rule.
\newblock \emph{Operations Research\/}~\emph{52}, 836--855.

\bibitem{Stolyar_Tezcan_underload}
\textsc{Stolyar, A.~L.} \textsc{and} \textsc{Tezcan, T.} (2010).
\newblock Control of systems with flexible multi-server pools: a shadow routing
  approach.
\newblock \emph{Queueing Systems\/}~\textbf{66},~1, 1--51.

\bibitem{Stolyar_Tezcan}
\textsc{Stolyar, A.~L.} \textsc{and} \textsc{Tezcan, T.} (2011).
\newblock Shadow routing based control of flexible multi-server pools in
  overload.
\newblock \emph{Operations Research\/}~\emph{59}, 1427--1444.

\bibitem{SY10}
\textsc{Stolyar, A.~L.} \textsc{and} \textsc{Yudovina, E.} (December 2010).
\newblock Systems with large flexible server pools: Instability of
  {``}natural{''} load balancing.
\newblock Submitted.
\newblock \url{arXiv:1012.4140}.

\end{thebibliography}

\end{document}